\newtheorem{theorem}{Theorem}
\newtheorem{lemma}{Lemma}
\newtheorem{remark}{Remark}
\newtheorem{cons}{Corollary}
\newtheorem{proposition}{Proposition}
\newtheorem{definition}{Definition}
\newcommand{\rank}{\mathrm{rank}\kern 2pt}
\renewcommand{\epsilon}{\varepsilon}
\renewcommand{\geq}{\geqslant}
\begin{document}

\title{Probabilistic estimation of the $\rank 1$ cross approximation accuracy}

\author[inm,mipt]{Osinsky~A.I.}
\ead{sasha\_o@list.ru}

\address[inm]{Institute of Numerical Mathematics RAS, Moscow, Gubkina str., 8, Russia}
\address[mipt]{Moscow Institute of Physics and Technology, Dolgoprudny, Institutsky per., 9, Russia}

\begin{abstract} 

In the construction of low-rank matrix approximation and maximum element search it is effective to use $\operatorname{maxvol}$ algorithm \cite{ALL-2010}. Nevertheless, even in the case of rank 1 approximation the algorithm does not always converge to the maximum matrix element, and it is unclear how often close to the maximum element can be found. In this article it is shown that with a certain degree of randomness in the matrix and proper selection of the starting column, the algorithm with high probability in a few steps converges to an element, which module differs little from the maximum. It is also shown that with more severe restrictions on the error matrix no restrictions on the starting column need to be introduced.

\noindent{\it AMS classification:} 65F30, 65F99, 65D05
\end{abstract}

\begin{keyword}
Low rank approximations; Pseudoskeleton approximations; Maximum volume principle  
\end{keyword}

\maketitle

\section{Introduction}
Among the methods of low-rank matrix approximation, an important place is held by the so-called cross or skeleton approximation. 
In this method approximation of a matrix $A$ is constructed as a product $C \hat A^{-1} R$, where $\hat A \in \mathbb{C}^{r \times r}$ is a submatrix of $A$, located at the intersection of columns $C$ and rows $R$ of $A$.

The resulting low-rank decomposition can be used for approximation of the original matrix, and for searching the maximum element. The similar approach is used for  tensor approximations \cite{OIV-TEE-2010}.

The accuracy of the skeleton and associated pseudoskeleton $CGR$ decompositions is guaranteed in case of submatrix $\hat A$ close to maximum volume \cite{OIV-TEE-2010, GSA-TEE-2001} or, more generally, maximum projective volume \cite{ZNL-OAI-2016}.
However, in general case, the search of the maximum volume submatrix is a NP-hard problem, and these estimates are not directly applicable.

One of the most popular methods for constructing cross low-rank approximation is the algorithm $\operatorname{maxvol}$ \cite{ALL-2010}. 
In the particular case of rank $1$ approximation it finds the maximum in modulus element in a randomly chosen column, then in the corresponding row (with the maximum element), and so on. 
Finally the resulting element is maximal in modulus element of its row and column. 
Unfortunately, this does not guarantee that it is maximal in the whole matrix (or even close to it). Therefore we can not guarantee that the obtained 
approximation will be accurate enough.

However, in practice, the obtained by  $\operatorname{maxvol} $ algorithm cross approximation is often a good approximation of the original matrix.
This probably means that if the matrix elements are in some sense random, the element found with the help of $ \operatorname{maxvol} $ is likely to be close to the maximum.

Estimates even for the rank 1 particular case are very important, since, for example, we can construct an approximation of rank $ k $ by applying an algorithm $ k $ times.

In Section~\ref{prob-sec} some probability estimates are obtained. 
In Section~\ref{th-sec} we prove the theorems that guarantee a high probability of obtaining sufficiently accurate approximation of rank 1. 
Finally, in Section~\ref{calc-sec} the results of numerical experiments with random matrices are shown and analized.

\section{Probability estimates for some important distributions}\label{prob-sec}

First of all, we need some general propositions about random variables.

\begin{proposition}\label{pr}
Let the random variable $x$ have distribution $\chi^2$ with $n > 2$ degrees of freedom. 
Then for a constant $c$ (the relevant values of $c$ will be determined later) the following holds
\[
\begin{gathered}
\mathbb{P}(x > n - 2 + 2\sqrt{c(n - 2) \ln n}) \leqslant \alpha n^{-c}, \hfill \\
\alpha = \left( {\frac{1}{{\sqrt {\pi (n - 2)}  }} + \frac{1}{2{\sqrt {c\pi \ln n} }}} \right){e^{\frac{{4\sqrt {\frac{{{c^3}{{\ln }^3}n}}{{n - 2}}} }}{3}}}.
\end{gathered}
\]
\end{proposition}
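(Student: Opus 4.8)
The plan is to work directly with the $\chi^2_n$ density $f(x) = \frac{1}{2^{n/2}\Gamma(n/2)} x^{n/2-1} e^{-x/2}$ and to estimate the tail integral $\mathbb{P}(x > t) = \frac{1}{2^{n/2}\Gamma(n/2)}\int_t^\infty e^{g(x)}\,dx$, where $g(x) = \left(\tfrac{n}{2}-1\right)\ln x - \tfrac{x}{2}$ and $t = (n-2) + \delta$ with $\delta = 2\sqrt{c(n-2)\ln n}$. Writing $m = \frac{n-2}{2}$, one checks that $g$ is concave ($g''(x) = -m/x^2 < 0$) with maximum at $x_0 = 2m = n-2$, which is exactly the left endpoint of the shift; thus $t$ lies just to the right of the mode and $g'(t) < 0$. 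The key idea is that all three ingredients of $\alpha$ — the prefactor, the decay $n^{-c}$, and the cubic correction in the exponent — arise by splitting $\frac{e^{g(t)}}{2^{n/2}\Gamma(n/2)}$ as $\frac{e^{g(x_0)}}{2^{n/2}\Gamma(n/2)}\cdot e^{g(t)-g(x_0)}$ and bounding the integral separately.

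First I would bound the integral by concavity: since the graph of $g$ lies below its tangent at $t$, we get $\int_t^\infty e^{g(x)}\,dx \leq e^{g(t)}\int_t^\infty e^{g'(t)(x-t)}\,dx = \frac{e^{g(t)}}{-g'(t)}$. A direct computation gives $-g'(t) = \tfrac{1}{2} - \tfrac{m}{t} = \tfrac{\delta}{2t}$, so the integral factor is exactly $\frac{2t}{\delta} = \sqrt{\tfrac{n-2}{c\ln n}} + 2$, using $t = (n-2)+\delta$ and the definition of $\delta$.

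Next I would control the normalization. By the one-sided Stirling bound $\Gamma(z+1) > \sqrt{2\pi z}\,(z/e)^z$ applied at $z = m$, together with $2^{n/2}\Gamma(n/2) = 2^{m+1}\Gamma(m+1)$ and $e^{g(x_0)} = (2m)^m e^{-m}$, the factors $e^{-m}$ and $(2m)^m$ cancel and one obtains $\frac{e^{g(x_0)}}{2^{n/2}\Gamma(n/2)} \leq \frac{1}{2\sqrt{2\pi m}} = \frac{1}{2\sqrt{\pi(n-2)}}$. For the remaining ratio I set $u = \tfrac{\delta}{2m} = 2\sqrt{\tfrac{c\ln n}{n-2}}$ and write $g(t) - g(x_0) = m\big[\ln(1+u) - u\big]$. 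The elementary inequality $\ln(1+u) - u \leq -\tfrac{u^2}{2} + \tfrac{u^3}{3}$, valid for every $u \geq 0$ (which I would verify by noting the third derivative of the difference is nonnegative, so that the difference and its first two derivatives vanish at $0$ and remain nonnegative), turns this into $g(t)-g(x_0) \leq -c\ln n + \tfrac{4}{3}\sqrt{\tfrac{c^3\ln^3 n}{n-2}}$, where the quadratic term reproduces $n^{-c}$ and the cubic term reproduces precisely the exponential correction in $\alpha$.

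Finally I would multiply the three estimates: $\mathbb{P}(x>t) \leq \frac{1}{2\sqrt{\pi(n-2)}}\cdot\Big(\sqrt{\tfrac{n-2}{c\ln n}}+2\Big)\cdot n^{-c} e^{\frac{4}{3}\sqrt{c^3\ln^3 n/(n-2)}}$, and observe that $\frac{1}{2\sqrt{\pi(n-2)}}\big(\sqrt{\tfrac{n-2}{c\ln n}}+2\big) = \frac{1}{\sqrt{\pi(n-2)}} + \frac{1}{2\sqrt{c\pi\ln n}}$, which is exactly the bracket in $\alpha$. The main obstacle is one of matching constants rather than of analysis: the whole result hinges on choosing the shift $\delta = 2\sqrt{c(n-2)\ln n}$ so that the quadratic Taylor term equals $-c\ln n$, and on keeping the two sharp elementary estimates (the one-sided Stirling inequality and the cubic log-inequality) lossless. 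The delicate point to watch is that both inequalities are one-sided in the correct direction and hold without any upper restriction on $n$ or $c$ beyond $n > 2$.
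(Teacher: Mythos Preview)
Your argument is correct and yields exactly the paper's bound. The only difference is presentational: where you invoke concavity of $g$ and the tangent-line estimate $\int_t^\infty e^{g(x)}\,dx \le e^{g(t)}/(-g'(t))$, the paper performs the explicit change of variables $z = y - (n-2)\ln(1+y/(n-2))$ and bounds the resulting Jacobian $(1+y/(n-2))/(y/(n-2))$ by its value at the lower limit. These two devices are equivalent (the Jacobian is precisely $1/(-2g'(x))$ up to the shift, and its monotonicity is exactly concavity of $g$), so the prefactors agree identically; the Stirling lower bound and the inequality $\ln(1+u)-u\le -u^2/2+u^3/3$ are shared verbatim. Your packaging via the tangent line is arguably the cleaner way to see why the constants come out as they do.
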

\begin{proof}
We use the expression for the density distribution $ \chi^2 $ and integrate, evaluating the Gamma function from below using Stirling's formula ($n! \geqslant \sqrt{2 \pi n} \left( \frac{n}{e} \right)^n$):
\begin{align}
  \mathbb{P}(x > n - 2 + 2\sqrt{c(n - 2) \ln n}) & = \int\limits_{n - 2 + 2\sqrt {c(n - 2)\ln n} }^\infty  {\frac{{{x^{\frac{n}{2} - 1}}{e^{ - \frac{x}{2}}}}}{{{2^{\frac{n}{2}}}\Gamma \left( {\frac{n}{2}} \right)}}dx} \hfill \nonumber \\
  & \leqslant \int\limits_{n - 2 + 2\sqrt {c(n - 2)\ln n} }^\infty  {\frac{{{x^{\frac{n}{2} - 1}}{e^{ - \frac{x}{2}}}}}{{{2^{\frac{n}{2}}}\sqrt {\pi (n - 2)} {{\left( {\frac{{n - 2}}{{2e}}} \right)}^{\frac{n}{2} - 1}}}}dx}  \hfill \nonumber \\
   & = /y = x - n + 2/ \hfill \nonumber \\ 
   & = \int\limits_{2\sqrt {c(n - 2)\ln n} }^\infty  {\frac{{{{\left( {n - 2 + y} \right)}^{\frac{n}{2} - 1}}{e^{ - \frac{{n - 2}}{2} - \frac{y}{2}}}}}{{2\sqrt {\pi (n - 2)} {{\left( {\frac{{n - 2}}{e}} \right)}^{\frac{n}{2} - 1}}}}dy}  \hfill \nonumber \\
   & = \int\limits_{2\sqrt {c(n - 2)\ln n} }^\infty  {\frac{{{{\left( {1 + \frac{y}{{n - 2}}} \right)}^{\frac{{n - 2}}{2}}}{e^{ - \frac{y}{2}}}}}{{2\sqrt {\pi (n - 2)} }}dy} \hfill \nonumber \\
   & = /z = y - (n - 2)\ln \left( {1 + \frac{y}{{n - 2}}} \right), dy = \frac{{1 + \frac{y}{{n - 2}}}}{{\frac{y}{{n - 2}}}}dz/ \hfill \nonumber \\
   & = \int\limits_{2\sqrt {c(n - 2)\ln n}  - (n - 2)\ln \left( {1 + 2\sqrt {\frac{{c\ln n}}{{n - 2}}} } \right)}^\infty  {\frac{{\left( {1 + \frac{y}{{n - 2}}} \right){e^{ - \frac{z}{2}}}}}{{2\frac{y}{{n - 2}}\sqrt {\pi (n - 2)} }}dz} \hfill \nonumber \\
   & \leqslant \frac{{1 + 2\sqrt {\frac{{c\ln n}}{{n - 2}}} }}{{2\sqrt {\frac{{c\ln n}}{{n - 2}}} \sqrt {\pi (n - 2)} }}{e^{ - \frac{{2\sqrt {c(n - 2)\ln n}  - (n - 2)\ln \left( {1 + 2\sqrt {\frac{{c\ln n}}{{n - 2}}} } \right)}}{2}}} \hfill \nonumber \\
   & \leqslant \frac{{1 + 2\sqrt {\frac{{c\ln n}}{{n - 2}}} }}{2{\sqrt {c\pi \ln n} }}{e^{ - c\ln n + \frac{4}{3}\sqrt {\frac{{{c^3}{{\ln }^3}n}}{{n - 2}}} }} \hfill \nonumber \\
   & = \left( {\frac{1}{{\sqrt {\pi (n - 2)}  }} + \frac{1}{2{\sqrt {c\pi \ln n} }}} \right){e^{\frac{4}{3}\sqrt {\frac{{{c^3}{{\ln }^3}n}}{{n - 2}}} }}{n^{ - c}}. \hfill \nonumber
\end{align}
\end{proof}

Proposition \ref{pr} is applicable only for sufficiently small $c$. Indeed,
\[
{e^{\frac{4}{3}\sqrt {\frac{{{c^3}{{\ln }^3}n}}{{n - 2}}} }}{n^{ - c}} = {n^{\frac{4}{3}c\sqrt {\frac{{c\ln n}}{{n - 2}}} }}{n^{ - c}} = {n^{ - c\left( {1 - \frac{4}{3}\sqrt {\frac{{c\ln n}}{{n - 2}}} } \right)}}.
\]
Thus, the probability is of the order $ n ^ {- c} $ only if $ \sqrt {\frac {c \ln n} {n - 2}} \ll 1 $. However, the condition on $c$ is pretty weak, so we further suppose $c$ to be large enough (e.g. $c > 1$).

Using Proposition~\ref{pr} we can prove the following Lemma. 

\begin{lemma}\label{l1}
Let the random vector $v$ be uniformly distributed on the sphere in the space $\mathbb{C}^{n}$. 
Then, with probability $1 - \alpha n^{-c} - \beta^k$ there is at least one  among any $k$ preselected elements that is not less in absolute value than $\tau$, with
\begin{eqnarray}
\alpha & = & \left( {\frac{1}{{\sqrt {\pi (n - 2)}  }} + \frac{1}{2{\sqrt {c\pi \ln n} }}} \right){e^{\frac{4}{3}\sqrt {\frac{{{c^3}{{\ln }^3}n}}{{n - 2}}} }}, \nonumber \\
\beta & = & \sqrt{ \frac{2\tau^2 \left( n - 2 + 2\sqrt{c(n - 2) \ln n} \right)}{\pi}}. \nonumber
\end{eqnarray}
\end{lemma}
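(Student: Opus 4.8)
The plan is to pass from the sphere to a Gaussian model. I would realize the uniform vector as $v = g/\|g\|$, where $g$ has independent Gaussian coordinates chosen so that $\|g\|^2$ follows the $\chi^2$ distribution with $n$ degrees of freedom appearing in Proposition~\ref{pr}, and each squared coordinate $|g_j|^2$ has the corresponding one-dimensional density. The advantage is that the event I must control, namely that all $k$ preselected entries are small, becomes $\{|g_j|^2 < \tau^2\|g\|^2 \text{ for all } j \in S\}$, where $S$ is the index set of size $k$. The numerator coordinates $g_j$ are independent, whereas the entries of the normalized vector $v$ are not; the whole point of the argument is to replace the dependent denominator $\|g\|^2$ by a deterministic threshold so that independence can be exploited.

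First I would split the bad event according to the size of the norm. Set $T = n - 2 + 2\sqrt{c(n-2)\ln n}$, the threshold of Proposition~\ref{pr}. On the event $\{\|g\|^2 > T\}$ I simply bound the probability by $\mathbb{P}(\|g\|^2 > T) \le \alpha n^{-c}$, which is exactly Proposition~\ref{pr} with the stated constant $\alpha$. On the complementary event $\{\|g\|^2 \le T\}$ the inequality $|v_j| < \tau$ forces $|g_j|^2 < \tau^2\|g\|^2 \le \tau^2 T$, so the bad event is contained in $\{|g_j|^2 < \tau^2 T \text{ for all } j \in S\}$. A union bound then gives $\mathbb{P}(\text{all }k\text{ small}) \le \alpha n^{-c} + \mathbb{P}(|g_j|^2 < \tau^2 T \text{ for all } j\in S)$.

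Next I would use independence of the $g_j$ to factor the second term as $\prod_{j\in S}\mathbb{P}(|g_j|^2 < \tau^2 T)$, reducing everything to a single one-dimensional small-ball estimate. For one coordinate this is a direct integration of the density near the origin: with $s = \tau^2 T$,
\[
\mathbb{P}(|g_j|^2 < s) = \int_0^{s}\frac{e^{-r/2}}{\sqrt{2\pi r}}\,dr \le \int_0^{s}\frac{dr}{\sqrt{2\pi r}} = \sqrt{\frac{2s}{\pi}} = \sqrt{\frac{2\tau^2 T}{\pi}} = \beta.
\]
Raising this to the power $k$ and adding the norm term yields $\mathbb{P}(\text{all }k\text{ small}) \le \alpha n^{-c} + \beta^k$, which is the complement of the claimed statement.

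The delicate point is the decoupling step, not the arithmetic. Coordinates of a uniform vector on the sphere are genuinely dependent, so one cannot directly multiply per-coordinate probabilities; the norm bound $T$ supplied by Proposition~\ref{pr} is precisely what lets me replace the random denominator by a constant and recover \emph{independent} events about the raw Gaussians. One should keep in mind that the resulting estimate is informative only when $\beta < 1$, i.e. when $\tau$ is small enough that $2\tau^2 T < \pi$; outside this regime the factor $\beta^k$ is vacuous. Finally, the crude step of dropping $e^{-r/2} \le 1$ in the integral is what produces the clean closed form of $\beta$, and it loses little when $s$ is small, which is exactly the regime of interest.
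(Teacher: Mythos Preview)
Your proposal is correct and follows essentially the same argument as the paper: realize $v$ via normalized Gaussians, use Proposition~\ref{pr} to bound the probability that the norm exceeds $T=n-2+2\sqrt{c(n-2)\ln n}$, and on the complementary event replace the random denominator by $T$ so that the $k$ small-coordinate events become independent and can be bounded by $\bigl(\sqrt{2\tau^2 T/\pi}\bigr)^k=\beta^k$. The only cosmetic difference is that the paper computes the single-coordinate small-ball probability from the Gaussian density of $|x_i|$ while you integrate the $\chi^2(1)$ density of $|g_j|^2$; the two calculations are the same after the substitution $r=x^2$.
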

\begin{proof}
Such a vector can be obtained by taking normally distributed random variables, choosing a random rotation of each component in $ \mathbb {C} $ and normalizing. Thus, if as a basis we take the values $ x_i $, then $ | x_i | ^ 2 \sim \chi ^ 2 (1) $, and
\[
|v_i|^2 = \frac{|x_i|^2}{\sum\limits_{j = 1}^n {|x_j|^2}}.
\]
From proposition \ref{pr} we see that
\[
\mathbb{P}(\sum\limits_{i = 1}^n {|x_i|^2} > n - 2 + 2\sqrt{c(n - 2) \ln n}) \leqslant \alpha n^{-c},
\]
Besides,
\begin{eqnarray}
\mathbb{P}\left( |x_i|^2 < t^2 \right) = \mathbb{P}\left(|x_i| < t \right) & = & 2\int\limits_0^{ t } {\frac{{{e^{ - \frac{{{x^2}}}{2}}}}}{{\sqrt {2\pi } }}dx}  \leqslant \sqrt {\frac{2t^2}{\pi}}. \nonumber \\
\mathbb{P}\left( |x_i|^2 < t^2, i = \overline {1,k} \right) & \leqslant & \left( \frac{2t^2}{\pi} \right)^{\frac{k}{2}}. \nonumber
\end{eqnarray}
Eventually, by choosing $t = \tau \sqrt{n - 2 + 2\sqrt{c(n - 2) \ln n}}$, we find that
\[
\mathbb{P}\left(|v_i| < \tau, \; i = \overline{1,k}\right) \leqslant \alpha n^{-c} + \left( \frac{2\tau^2 \left( n - 2 + 2\sqrt{c(n - 2) \ln n} \right)}{\pi} \right)^{\frac{k}{2}}.
\]
\end{proof}

\section{Theorems on the probability of receiving a good low-rank approximations}\label{th-sec}

\begin{theorem}\label{thw}
Let $A = \sigma u v^* + E$, $\sigma > 0$, $\left\| u \right\|_2 = \left\| v \right\|_2 = 1$, $A \in \mathbb{C}^{m \times n}$.
Let vector $v$ be uniformly distributed on the sphere in $\mathbb{C}^n$, $n > 2$.
Let us denote
\[
  \delta = \| E \|_C.
\]
Let
\begin{equation}\label{ed}
\varepsilon  = \frac{{{{\left\| E \right\|}_C}}}{{{{\left\| {A - E} \right\|}_C}}} = \frac{{{{\left\| E \right\|}_C}}}{{\sigma {{\left\| u \right\|}_\infty }{{\left\| v \right\|}_\infty }}} \leqslant \frac{1}{8}.
\end{equation}
Let
\begin{eqnarray}
\alpha & = & \left( {\frac{1}{{\sqrt {\pi (n - 2)}  }} + \frac{1}{2{\sqrt {c\pi \ln n} }}} \right){e^{\frac{4}{3}\sqrt {\frac{{{c^3}{{\ln }^3}n}}{{n - 2}}} }}, \nonumber \\
\beta_v & = & \frac{\left( 1 - \sqrt{1 - 8 \varepsilon}\right) \left\| v \right\|_{\infty} \sqrt{ n - 2 + 2\sqrt{c(n - 2) \ln n} } }{\sqrt{2\pi}}. \nonumber
\end{eqnarray}
Let the algorithm $\operatorname{maxvol}$ \cite{ALL-2010}, on the first step of which we choose a maximal element among the first $k$ columns of the matrix return element $a$ on the intersection of the row $r$ and column $c$. Then with probability $1 - \alpha n^{-c} - \beta_v ^ k$
\begin{equation}\label{th}
\left\| A - c a^{-1} r \right\|_C \leqslant 8 \delta \frac{1 + \varepsilon}{1+\sqrt{1-8\varepsilon}-2\varepsilon}.
\end{equation}
\end{theorem}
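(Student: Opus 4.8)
The plan is to isolate the randomness, which enters only through the starting column, and to treat the rest deterministically. Put $\mu=\|A-E\|_C=\sigma\|u\|_\infty\|v\|_\infty$, so that $\delta=\varepsilon\mu$, and write $s=\sqrt{1-8\varepsilon}$. First I would apply Lemma~\ref{l1} to the right factor $v$, which is uniform on the sphere, taking the $k$ starting columns as the preselected coordinates and $\tau=(1-s)\|v\|_\infty/2$; this is precisely the value for which the $\beta$ of Lemma~\ref{l1} equals $\beta_v$. Hence with probability $1-\alpha n^{-c}-\beta_v^k$ there is a starting column index $j\leq k$ with $|v_j|\geq\tau$, and since $v$ has a continuous law this holds with strict inequality almost surely.

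On this event I would bound the initial pivot from below. Let $i^\ast,j^\ast$ index the largest-modulus entries of $u$ and $v$. The maximal element among the first $k$ columns has modulus at least $|A_{i^\ast j}|\geq\sigma\|u\|_\infty|v_j|-\delta>\sigma\|u\|_\infty\tau-\delta=\mu(1-2\varepsilon-s)/2=:r_-$. Since every step of $\operatorname{maxvol}$ passes to an element of non-decreasing modulus, the returned pivot $a=A_{ij}$ satisfies $|a|>r_-$.

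The crucial step is to push this past the gap. Because $a$ is maximal in its row $i$ and its column $j$, the entries $A_{ij^\ast}$ and $A_{i^\ast j}$ have modulus at most $|a|$, giving $\sigma|u_i|\|v\|_\infty\leq|a|+\delta$ and $\sigma\|u\|_\infty|v_j|\leq|a|+\delta$; multiplying these two and inserting $\sigma|u_i||v_j|\geq|a|-\delta$ together with $\sigma\|u\|_\infty\|v\|_\infty=\mu$ yields $(|a|-\delta)\mu\leq(|a|+\delta)^2$. As a quadratic in $|a|$ this has discriminant $\mu^2(1-8\varepsilon)=\mu^2 s^2\geq0$ — this is exactly where (\ref{ed}), i.e. $\varepsilon\leq1/8$, is used — and roots $r_\pm=\mu(1-2\varepsilon\pm s)/2$, so local maximality forces $|a|\leq r_-$ or $|a|\geq r_+$. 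Combined with $|a|>r_-$ this gives $|a|\geq r_+=\mu(1-2\varepsilon+s)/2$.

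It then remains to estimate the residual entrywise. Writing $A_{pq}-A_{pj}A_{iq}/a=(A_{pq}A_{ij}-A_{pj}A_{iq})/a$ and substituting $A=\sigma uv^\ast+E$, the pure rank-one part of the numerator cancels, since $(\sigma u_p\bar v_q)(\sigma u_i\bar v_j)=(\sigma u_p\bar v_j)(\sigma u_i\bar v_q)$, leaving four terms linear in $E$ and two quadratic in $E$. Bounding each entry of $\sigma uv^\ast$ by $\mu$ and each entry of $E$ by $\delta$ gives $|A_{pq}A_{ij}-A_{pj}A_{iq}|\leq4\mu\delta+2\delta^2\leq4\delta(\mu+\delta)=4\delta\mu(1+\varepsilon)$, and dividing by $|a|\geq r_+$ produces $\|A-ca^{-1}r\|_C\leq 8\delta(1+\varepsilon)/(1-2\varepsilon+s)$, which is (\ref{th}). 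I expect the third paragraph to be the main obstacle: the probability estimate is a direct call to Lemma~\ref{l1} and the cancellation in the last paragraph is routine, but the dichotomy that confines $|a|$ to the upper branch — deriving the quadratic from local maximality and then excluding the lower root by means of the strict starting bound and the monotonicity of $\operatorname{maxvol}$ — is where the hypothesis $\varepsilon\leq1/8$ and the randomized choice of the starting column are genuinely needed.
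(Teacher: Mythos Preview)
Your argument is correct and is, in one respect, slicker than the paper's. Both proofs invoke Lemma~\ref{l1} with the same threshold $\tau=\mu_1\|v\|_\infty$ (where $\mu_1=(1-\sqrt{1-8\varepsilon})/2$) and both finish with the same bound on $|a|$, namely $|a|\geq \sigma\|u\|_\infty\|v\|_\infty(\mu_2-\varepsilon)=r_+$. The difference is how they get there. The paper argues \emph{dynamically}: it introduces the ratios $\mu_u=|u_i|/\|u\|_\infty$, $\mu_v=|v_j|/\|v\|_\infty$, proves that along the $\operatorname{maxvol}$ trajectory these ratios can never sit in the open interval $(\mu_1,\mu_2)$ and cannot be on opposite sides of it, and then separately checks that the maximum over the first $k$ columns must actually lie in a ``good'' column (not merely coexist with one). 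Only after establishing $\mu_u,\mu_v\geq\mu_2$ does the paper extract the bound on $|a|$.

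You instead argue \emph{statically}: using only that the terminal pivot $a=A_{ij}$ is maximal in its own row and its own column, you multiply the two inequalities $\sigma|u_i|\|v\|_\infty\leq|a|+\delta$, $\sigma\|u\|_\infty|v_j|\leq|a|+\delta$ and insert $\sigma|u_i||v_j|\geq|a|-\delta$ to obtain the single quadratic constraint $\mu(|a|-\delta)\leq(|a|+\delta)^2$, whose roots are exactly $r_\pm$. Monotonicity of $\operatorname{maxvol}$ and the strict starting bound $|a|>r_-$ then force $|a|\geq r_+$, with no intermediate case analysis. This bypasses the paper's separate argument that the initial maximum lies in a good column, and it never needs the step-by-step dichotomy on $\mu_u,\mu_v$. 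For the final residual bound you compute $A_{pq}A_{ij}-A_{pj}A_{iq}$ directly and use the rank-one cancellation, which is exactly the content of the cited Theorem~1 from~\cite{OIV-TEE-2010} for $2\times2$ submatrices, so that step is equivalent but self-contained. What you lose relative to the paper is the finer information that $\mu_u,\mu_v\geq\mu_2$ separately; the paper reuses this in Theorem~\ref{thw2} (the ``four steps suffice'' refinement), so its more elaborate route is not wasted effort.
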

\begin{proof} 
Consider an arbitrary element of the matrix $A$:
\[
a_{ij} = \sigma u_i v_j + e_{ij}.
\] 
Fix the corresponding $j$-th column. 
Let 
\[  \mu = \frac{|v_j|}{\left\| v \right\|_{\infty}}.\]

Consider the maximum in modulus element $a_{sj}$ in this column.
It is easy to see that 
\[
|a_{sj}| \geqslant \sigma \mu \left\| u \right\|_{\infty} \left\| v \right\|_{\infty} - \delta.
\]
(This estimate can be obtained by taking into account the row $s_0$ with $|u_{s_0}| = \|u\|_\infty),$ and $|a_{sj}|  \geqslant |a_{s_0 j}|).$ 
 
We will find conditions on $\mu$, which guarantee that the following inequality holds
\[
|u_s| > \mu \left\| u \right\|_{\infty}.
\]
If this is not true (if $|u_s| \leqslant \mu \|u\|_\infty$), we can get the inequality
\[
|a_{sj}| \leqslant
\sigma \mu^2 \left\| u \right\|_{\infty} \left\| v \right\|_{\infty} + \delta.
\]

From the above two estimations for $|a_{sj}| $  we get the condition
\begin{eqnarray}
\sigma \mu^2 \left\| u \right\|_{\infty} \left\| v \right\|_{\infty} + \delta & < & \sigma \mu \left\| u \right\|_{\infty} \left\| v \right\|_{\infty} - \delta \nonumber \\
\mu^2 - \mu + 2\varepsilon & < & 0. \nonumber
\end{eqnarray}
Solving the quadratic equation, we obtain the following condition on $\mu$:
\[
\mu_1 = \frac{1 - \sqrt{1 - 8\varepsilon}}{2} < \mu < \frac{1 + \sqrt{1 - 8\varepsilon}}{2} = \mu_2.
\]

If $\mu \geqslant \mu_2$, then $|u_s | \geqslant \mu_2 \left\| u \right\|_{\infty}$.
Indeed, in this case it is necessary to verify the inequality
\begin{eqnarray}
\sigma \mu \mu_2 \left\| u \right\|_{\infty} \left\| v \right\|_{\infty} + \delta & \leqslant & \sigma \mu \left\| u \right\|_{\infty} \left\| v \right\|_{\infty} - \delta, \nonumber \\
\mu \mu_2 + \varepsilon & \leqslant & \mu - \varepsilon \nonumber
\end{eqnarray}
Noting that $\mu_2 + \mu_1 = 1$, we get
\[
\mu \mu_1 \geqslant 2\varepsilon.
\]
Since
\[
\mu_1 \mu_2 = 2\varepsilon,
\]
the resulting inequality is equivalent to the following:
\[
1 \leqslant \frac{\mu}{\mu_2},
\]
which is true when $\mu \geqslant \mu_2$.
We will get the same conditions, if we swap rows and columns.

These estimates allow us to understand the conditions of halting the algorithm $\operatorname{maxvol}.$
Indeed, let $a_{ij}$ be an element of the matrix $A$, which is the maximum in the $i$-th row and 
$j$-th column of $A$ 
(the element on which the algorithm $\operatorname{maxvol}$ stops). 

Denote $\mu_u = \frac{|u_i|}{\|u\|_\infty}$ and $\mu_v = \frac{|v_j|}{\|v\|_\infty}$. 
We prove that $\mu_u$ and $\mu_v$ at the same time satisfy one of two conditions:
they both are either not greater than $\mu_1$
\[
\mu_u \leqslant \mu_1, \;\;\;\; \mu_v \leqslant \mu_1,
\]
or not less than $\mu_2$ 
\[
\mu_u \geqslant \mu_2, \;\;\;\; \mu_v \geqslant \mu_2.
\]
First, suppose, for example, $\mu_1 < \mu_v < \mu_2$. 
Then, as proved earlier for the element $u_i$ corresponding to the maximum in modulus element of the column, the following inequality is satisfied
\[
|u_i| = \mu_u \|u\|_\infty > \mu_v \|u\|_\infty.
\]
Since $a_{ij}$ is also maximal in the row, by repeating the reasoning, we come to the contradiction
\[
|v_j| = \mu_v \|v\|_\infty > \mu_u \|v\|_\infty > \mu_v \|v\|_\infty.
\]
Thus, neither $\mu_u$, nor $\mu_v$ can be inside the interval $\left( \mu_1,\mu_2 \right).$

It remains to prove the impossibility of the fact that $\mu_u$ and $\mu_v$ are separated by the interval $\left( \mu_1, \mu_2\right).$ Assume, for example, that $\mu_u \leqslant \mu_1$ and $\mu_v \geqslant \mu_2.$ Since $a_{ij}$ is the maximum in $j$-th column, and $\mu_v \geqslant \mu_2$, then, as proved earlier, $\mu_u \geqslant \mu_2$, which contradicts the assumption.

From this we conclude that if at the first step we got to the element with $|v_j| > \mu_1 \left\| v \right\|_{\infty}$, then the value of $\mu$ will increase and eventually will not be less than $\mu_2$.

Let's call the columns (rows) with $|v_j| > \mu_1 \left\| v \right\|_{\infty}$ 
($|u_i| \geqslant \mu_1 \left\| u \right\|_{\infty}$) ``good'', and the others ``bad''. 

By  Lemma~\ref{l1} with $\tau = \mu_1 \left\| v \right\|_{\infty}$ there is at least one ``good'' column among the first $k$ columns with high probability. 
We will show that in this case the maximum in modulus element among these $k$ columns needs to belong to a ``good'' column.

In any ``good'' column (with $|v_{j_0}| > \mu_1 \left\| v \right\|_{\infty}$) there is an element corresponding to $|u_{i_0}| = \|u\|_{\infty}$. Thus
\[
|a_{i_0 j_0}| > \sigma \mu_1 \left\| u \right\|_{\infty} \left\| v \right\|_{\infty} - \delta, 
\]
Then for the maximum in modulus element $a_{ij}$ among these $k$ columns the inequality holds even more so: 
\begin{equation}
\label{eq:max-el-1}
|a_{ij}| \geqslant \sigma \mu_1 \left\| u \right\|_{\infty} \left\| v \right\|_{\infty} - \delta, 
\end{equation}

From the equation for $\mu_1$,
\[
\sigma \mu_1 \left\| u \right\|_{\infty} \left\| v \right\|_{\infty} - \delta = 
\sigma \mu_1^2 \left\| u \right\|_{\infty} \left\| v \right\|_{\infty} + \delta,
\]
we substitute the right-hand side in (\ref{eq:max-el-1}) to get
\begin{equation}
\label{eq:max-el-2}
|a_{ij}| > \sigma \mu_1^2 \left\| u \right\|_{\infty} \left\| v \right\|_{\infty} + \delta. 
\end{equation}
A consequence of (\ref{eq:max-el-2}) is the inequality on the product of $\mu_u$ and $\mu_v$
$$
\mu_u \cdot \mu_v > \mu_1^2.
$$
Thus, if $a_{ij}$ does not belong to the ``good" column ($\mu_v \leqslant \mu_1$), then $\mu_u > \mu_1$ and $a_{ij}$ belongs to the ``good" row.

However, in this case due to the fact that $a_{ij}$ is the maximum in modulus in its column, then, as proved earlier, either $\mu_v \geqslant \mu_2 > \mu_1$, or $\mu_v > \mu_u > \mu_1$,
and, on the contrary to the initial assumption, the column containing $a_{ij}$ is ``good". 

Thus, as a result of the procedure $\operatorname{maxvol}$, we get the element with modulus not less than
\[
\sigma \mu_2 \left\| u \right\|_{\infty} \left\| v \right\|_{\infty} - \delta = \sigma \mu_2^2 \left\| u \right\|_{\infty} \left\| v \right\|_{\infty} + \delta.
\]
Consider a submatrix $2 \times 2$ of $A$:
\[
\hat A = \left[ {\begin{array}{*{20}{c}}
  a&b \\ 
  c&d 
\end{array}} \right],
\]
where $a$ is the element found with $\operatorname{maxvol}$. For the absolute values of the elements $a$ and $d$ following estimates hold
\begin{eqnarray*}
|d| & \leqslant & \sigma \|u\|_\infty \|v\|_\infty + \delta = \sigma \|u\|_\infty \|v\|_\infty \left(1 + \varepsilon \right), \\ 
|a| & \geqslant & \sigma \mu_2 \|u\|_\infty \|v\|_\infty - \delta = \sigma \|u\|_\infty \|v\|_\infty \left(\mu_2 - \varepsilon \right). 
\end{eqnarray*}
Using Theorem 1 from \cite{OIV-TEE-2010}, we get that even if $|d| > |a|$
\[
\left |d - ba^{-1}c \right | = \left | a - bd^{-1}c\right | \frac{|d|}{|a|} \leqslant 4\delta \frac{1 + \varepsilon}{\mu_2 - \varepsilon}.
\]
Substituting the expression for $\mu_2$ and taking into account that submatrix is arbitrary, we obtain the estimate (\ref{th}).
\end{proof}

\begin{remark}
The theorem remains true with probability $1 - \gamma^k$, if the vector $v$ has no more than $\gamma n$ elements, which differ from the maximum more than $\mu_1$ times. 
This allows us to use the result for different distributions of $v$.
\end{remark}

\begin{cons}
Under the conditions of Theorem \ref{thw}:
\begin{enumerate}
\item
\[
\left\| A - c a^{-1} r \right\|_C \leqslant 4 \delta (1 + 16\varepsilon) \leqslant 12 \delta.
\]
\item
\[
\beta_v \leqslant \frac{8\varepsilon \left\| v \right\|_{\infty} \sqrt{ n - 2 + 2\sqrt{c(n - 2) \ln n} } }{\sqrt{2\pi}}.
\]
\item
In order to make the error satisfy \eqref{th} with the probability not exceeding $(\alpha + 1)n^{-c}$, it is sufficient to take
\[
k = \frac{c \ln n}{\ln \frac{1}{\beta_v}}.
\]
\end{enumerate}
\end{cons}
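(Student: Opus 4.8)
The three claims all descend from Theorem~\ref{thw} together with the single elementary inequality $\sqrt{t}\geqslant t$, valid for $t\in[0,1]$, which I would apply with $t=1-8\varepsilon$; this is legitimate precisely because \eqref{ed} forces $\varepsilon\leqslant\frac18$, so that $1-8\varepsilon\in[0,1]$. The plan is to treat part 2 first, since it is the most direct. Starting from the definition of $\beta_v$ in the statement of Theorem~\ref{thw}, its only $\varepsilon$-dependent factor is $1-\sqrt{1-8\varepsilon}$, and from $\sqrt{1-8\varepsilon}\geqslant 1-8\varepsilon$ we get $1-\sqrt{1-8\varepsilon}\leqslant 8\varepsilon$; substituting this into the numerator and leaving every other factor untouched yields the stated estimate immediately.

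For part 1, the goal is to show that the right-hand side of \eqref{th} is at most $4\delta(1+16\varepsilon)$. After cancelling the common factor $4\delta$, this reduces to the scalar inequality
\[
\frac{2(1+\varepsilon)}{1+\sqrt{1-8\varepsilon}-2\varepsilon}\leqslant 1+16\varepsilon.
\]
Here I would again invoke $\sqrt{1-8\varepsilon}\geqslant 1-8\varepsilon$ to bound the denominator from below by $2-10\varepsilon$, which is positive because $\varepsilon\leqslant\frac18<\frac15$; the left-hand side is then at most $\frac{1+\varepsilon}{1-5\varepsilon}$, and clearing denominators turns the required inequality into $80\varepsilon^2\leqslant 10\varepsilon$, i.e.\ exactly $\varepsilon\leqslant\frac18$, which holds by \eqref{ed}. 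The final bound $4\delta(1+16\varepsilon)\leqslant 12\delta$ is then immediate, since $\varepsilon\leqslant\frac18$ gives $1+16\varepsilon\leqslant 3$.

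For part 3, recall that Theorem~\ref{thw} delivers \eqref{th} with success probability $1-\alpha n^{-c}-\beta_v^{k}$, hence with failure probability at most $\alpha n^{-c}+\beta_v^{k}$. To push the total failure probability below $(\alpha+1)n^{-c}$ it suffices to arrange $\beta_v^{k}\leqslant n^{-c}$; taking logarithms, which is legitimate because $\beta_v<1$ so that $\ln\frac{1}{\beta_v}>0$, this becomes $k\ln\frac{1}{\beta_v}\geqslant c\ln n$, i.e.\ $k\geqslant\frac{c\ln n}{\ln\frac{1}{\beta_v}}$, which is exactly the stated choice of $k$. The only step I would flag as delicate is the relaxation $\sqrt{1-8\varepsilon}\geqslant 1-8\varepsilon$ used in part 1: one must verify it is not too lossy, and in fact it is exactly tight, since the resulting polynomial inequality collapses precisely to the hypothesis $\varepsilon\leqslant\frac18$ rather than to a strictly stronger condition; everything else is routine bookkeeping.
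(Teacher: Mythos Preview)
Your proof is correct; the paper states this corollary without proof, so there is nothing to compare against directly. The only place the paper carries out a similar computation is in the proof of Theorem~\ref{thw2}, where the same intermediate inequality $\frac{1+\varepsilon}{1-5\varepsilon}\leqslant 1+16\varepsilon$ appears (derived there via the factorisation $(1+\varepsilon)(1+\tfrac{40}{3}\varepsilon)\leqslant 1+16\varepsilon$ rather than your direct cross-multiplication), confirming that your relaxation $\sqrt{1-8\varepsilon}\geqslant 1-8\varepsilon$ leading to the denominator bound $1-5\varepsilon$ is exactly the route the authors have in mind. One small remark on part~3: you correctly assume $\beta_v<1$ to make $\ln\frac{1}{\beta_v}>0$, but note that this is not guaranteed by the hypotheses of Theorem~\ref{thw} alone and is an implicit side condition of the corollary (as the paper itself later observes, one needs roughly $\varepsilon\sim 1/\sqrt{\ln n}$ to ensure it).
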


\begin{cons}
If in Theorem \ref{thw} the matrix is real, then
\[
\left\| A - c a^{-1} r \right\|_C \leqslant 4 \delta (1 + 4\varepsilon) \leqslant 6 \delta.
\]
\end{cons}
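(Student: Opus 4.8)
The plan is to reproduce the proof of Theorem~\ref{thw} almost line by line and to concentrate the real-versus-complex difference in a single estimate at the very end. First I would check that every step up to the $2\times2$ reduction uses only the moduli $|u_i|,|v_j|,|a_{ij}|$ and the triangle inequality, so that it is insensitive to the underlying field. In particular the inequality $\mu^2-\mu+2\varepsilon<0$, the roots $\mu_1=\frac{1-\sqrt{1-8\varepsilon}}{2}$ and $\mu_2=\frac{1+\sqrt{1-8\varepsilon}}{2}$, the dichotomy that a stopping element has either $\mu_u,\mu_v\leq\mu_1$ or $\mu_u,\mu_v\geq\mu_2$, and the ``good column'' argument based on Lemma~\ref{l1} all carry over unchanged. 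Hence, with the same probability $1-\alpha n^{-c}-\beta_v^{\,k}$, the returned element obeys $|a|\geq\sigma\|u\|_\infty\|v\|_\infty(\mu_2-\varepsilon)$, every entry obeys $|d|\leq\sigma\|u\|_\infty\|v\|_\infty(1+\varepsilon)$, and the error is governed by the $2\times2$ Schur complement through $|d-ba^{-1}c|=|a-bd^{-1}c|\,\frac{|d|}{|a|}$.

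The entire gain must therefore come from the estimate of $|a-bd^{-1}c|$, the only point where Theorem~1 of \cite{OIV-TEE-2010} is used. I would make this step explicit by expanding the determinant of the block $\hat A=\bigl[\begin{smallmatrix}a&b\\ c&d\end{smallmatrix}\bigr]$: because the rank-one part $\sigma uv^*$ is reproduced exactly by the cross, $\det\hat A$ collapses to a term linear in the entries of $E$ plus $\det E$, so $|a-bd^{-1}c|=|\det\hat A|/|d|$. Over $\mathbb{C}$ this yields the constant responsible for the factor $8$ in \eqref{th}; over $\mathbb{R}$ the same quantity admits a strictly sharper bound, the improvement living in the $\varepsilon$-dependent (higher order) part of the estimate and coming from the real structure of $\det\hat A$ together with the stopping constraints $|b|,|c|\leq|a|$. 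I expect this to shrink the $\varepsilon$-overhead so that the coefficient $16$ of the first corollary becomes $4$.

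With that estimate in hand, I would substitute $\mu_2=\frac{1+\sqrt{1-8\varepsilon}}{2}$, reduce the resulting expression to $4\delta(1+4\varepsilon)$ by an elementary inequality valid on $[0,\tfrac18]$ and sharp at $\varepsilon=\tfrac18$, and then use $\varepsilon\leq\tfrac18$ to conclude $\|A-ca^{-1}r\|_C\leq6\delta$, the block being arbitrary. The hard part is exactly the middle paragraph: quantifying the real structure of the $2\times2$ determinant precisely enough to shrink the $\varepsilon$-dependent part of the Schur-complement bound by the required amount, i.e.\ proving the real form of the maximum-volume estimate of \cite{OIV-TEE-2010} instead of merely quoting its complex constant. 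Everything else is a transcription of the proof of Theorem~\ref{thw} plus a scalar inequality.
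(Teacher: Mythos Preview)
Your overall architecture is right: the proof of Theorem~\ref{thw} carries over verbatim up to the $2\times2$ block, and the whole improvement must come from the estimate of $|d-ba^{-1}c|$. But the mechanism you point to does not work. The expansion of $\det\hat A$ you describe --- the rank-one part drops out, leaving terms linear in $E$ plus $\det E$ --- is identical over $\mathbb{R}$ and over $\mathbb{C}$; it is exactly what produces the complex bound and cannot by itself sharpen it. Saying the gain ``comes from the real structure of $\det\hat A$ together with $|b|,|c|\leq|a|$'' is a placeholder, not an argument, and you yourself flag it as the hard part you have not done.

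The paper's idea is different and much more concrete: a \emph{sign} argument. Split into two cases. If $|d|\leq|a|$, the complex bound already gives $|d-ba^{-1}c|=|a-bd^{-1}c|\,\frac{|d|}{|a|}\leq 4\delta$ and there is nothing to do. If $|d|>|a|$, then since $|a|\geq\sigma\mu_2^2\|u\|_\infty\|v\|_\infty+\delta$ the rank-one parts of $a$, $b$, $c$, $d$ all dominate their perturbations from $E$, so over $\mathbb{R}$ each entry has the sign of its rank-one part $\sigma u_{\cdot}v_{\cdot}$. But then $\operatorname{sign}(b)\operatorname{sign}(a)^{-1}\operatorname{sign}(c)=\operatorname{sign}(d)$, hence $d$ and $ba^{-1}c$ have the same sign and
\[
|d-ba^{-1}c|=|d|-|ba^{-1}c|\leq \sigma\|u\|_\infty\|v\|_\infty(1+\varepsilon)-\sigma\|u\|_\infty\|v\|_\infty(\mu_2-\varepsilon)=2\delta+\sigma\|u\|_\infty\|v\|_\infty(1-\mu_2),
\]
and $1-\mu_2=\mu_1\leq 4\varepsilon$ finishes the job. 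This sign cancellation is the real-specific input; it is what you need to supply in place of the vague ``real structure of the determinant''.
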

\begin{proof}
Changes to the proof can be made when considering the submatrix
\[
\hat A = \left[ {\begin{array}{*{20}{c}}
  a&b \\ 
  c&d 
\end{array}} \right].
\]
The result can be more than $4\delta$ only if $|d| > |a|$, but in this case, as $\mu_2 > \varepsilon$, the matrix $E$ does not affect the sign of $a$ and, more so, the signs on $b$, $c$ and $d$. Therefore $sign(d) = sign(ba^{-1}c)$.
Finally,
\begin{align}
|d - b a^{-1} c| & \leqslant |d| - |b a^{-1} c| \nonumber \\ & \leqslant \sigma \|u\|_\infty \|v\|_\infty (1 + \varepsilon) - \sigma \|u\|_\infty \|v\|_\infty (\mu_2 - \varepsilon) \nonumber \\
& \leqslant 2 \delta + \sigma \|u\|_\infty \|v\|_\infty (1-\mu_2) \nonumber \\
& \leqslant 4 \delta (1 + 4\varepsilon) \leqslant 6 \delta. \nonumber
\end{align}
\end{proof}

\begin{theorem}\label{thw2}
Let under the conditions of Theorem \ref{thw}
\[
\beta_v = \frac{8\varepsilon \left\| v \right\|_{\infty} \sqrt{ n - 2 + 2\sqrt{c(n - 2) \ln n} } }{\sqrt{2\pi}}.
\]
Let the algorithm $\operatorname{maxvol}$ perform just $4$ steps, and return an element regardless of whether it is maximal in its column, or not. 
Then
\[
\left\| A - c a^{-1} r \right\|_C \leqslant 4 \delta \left(1 + 16\varepsilon\right).
\]
\end{theorem}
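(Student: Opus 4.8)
The plan is to track how the quality ratios $\mu_{u,i}=|u_i|/\|u\|_\infty$ and $\mu_{v,j}=|v_j|/\|v\|_\infty$ of the running pivot grow over the first few steps of $\operatorname{maxvol}$, and then feed the resulting lower bound on $|a|$ into the same $2\times2$ block estimate used in Theorem~\ref{thw}. Since the present $\beta_v$ corresponds through Lemma~\ref{l1} to the threshold $\tau=4\varepsilon\|v\|_\infty$ (note $1-\sqrt{1-8\varepsilon}\leq 8\varepsilon$, so $\mu_1\leq 4\varepsilon$), I would first record that, arguing exactly as in Theorem~\ref{thw}, with probability $1-\alpha n^{-c}-\beta_v^k$ at least one of the first $k$ columns is ``good'' in the stronger sense $|v_j|\geq 4\varepsilon\|v\|_\infty$; the rest of the argument is deterministic on this event. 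Denote by $(i_1,j_1)$ the element chosen on the first step (the maximum in modulus over the first $k$ columns, hence maximal in its column $j_1$), by $j_2$ the column found from row $i_1$ on the second step, by $i_3$ the row found from column $j_2$ on the third step, and by $j_4$ the column found from row $i_3$ on the fourth step; the returned element is $a=a_{i_3 j_4}$, which is maximal in its row $i_3$.

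The main obstacle, and the only genuinely new point compared with Theorem~\ref{thw}, is to show that already after one step the pivot row satisfies $\mu_{u,i_1}\geq\tfrac12$, irrespective of where in the column spectrum $(i_1,j_1)$ happens to land (so that no lower bound on $\mu_{v,j_1}$ itself is needed). I would obtain this from two independent lower bounds on the block maximum. On one hand, the good column contains, in the row where $|u|=\|u\|_\infty$, an entry of modulus at least $3\sigma\varepsilon\|u\|_\infty\|v\|_\infty$, so $|a_{i_1 j_1}|\geq 3\sigma\varepsilon\|u\|_\infty\|v\|_\infty$; comparing with $|a_{i_1 j_1}|\leq\sigma\mu_{u,i_1}\mu_{v,j_1}\|u\|_\infty\|v\|_\infty+\delta$ yields the product bound $\mu_{u,i_1}\mu_{v,j_1}\geq 2\varepsilon$, i.e.\ $\mu_{u,i_1}\geq 2\varepsilon/\mu_{v,j_1}$. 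On the other hand, $a_{i_1 j_1}$ is maximal in its column, so exactly the column estimate of Theorem~\ref{thw} gives $\mu_{u,i_1}\geq 1-2\varepsilon/\mu_{v,j_1}$. Writing $x=2\varepsilon/\mu_{v,j_1}$, these two read $\mu_{u,i_1}\geq\max(x,\,1-x)\geq\tfrac12$, which is the desired uniform bound.

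From here the argument is a short deterministic recursion built from the row/column estimates already proved in Theorem~\ref{thw} (for the maximum of a column, $\mu_u\geq 1-2\varepsilon/\mu_v$, and symmetrically for a row). Starting from $\mu_{u,i_1}\geq\tfrac12$, the second step gives $\mu_{v,j_2}\geq 1-2\varepsilon/\mu_{u,i_1}\geq 1-4\varepsilon$, and the third step gives $\mu_{u,i_3}\geq 1-2\varepsilon/\mu_{v,j_2}\geq 1-2\varepsilon/(1-4\varepsilon)\geq 1-4\varepsilon$, where the last inequality uses $\varepsilon\leq\tfrac18$, so that $1-4\varepsilon\geq\tfrac12$. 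The fourth step only serves to make $a=a_{i_3 j_4}$ maximal in its row $i_3$; together with $\mu_{u,i_3}\geq 1-4\varepsilon$ and $\delta=\varepsilon\sigma\|u\|_\infty\|v\|_\infty$ this yields $|a|\geq\sigma\mu_{u,i_3}\|u\|_\infty\|v\|_\infty-\delta\geq\sigma\|u\|_\infty\|v\|_\infty(1-5\varepsilon)$.

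Finally I would close as in Theorem~\ref{thw}. For an arbitrary $2\times2$ block $\hat A$ containing $a$ (with entries $a,b,c,d$) one has $|d|\leq\sigma\|u\|_\infty\|v\|_\infty(1+\varepsilon)$, and Theorem~1 of \cite{OIV-TEE-2010} together with the identity $|d-ba^{-1}c|=|a-bd^{-1}c|\,|d|/|a|$ gives $\|A-ca^{-1}r\|_C\leq 4\delta\,|d|/|a|\leq 4\delta\frac{1+\varepsilon}{1-5\varepsilon}$ (the case $|d|\leq|a|$ being even better, bounded by $4\delta$). It then remains to verify the elementary inequality $\frac{1+\varepsilon}{1-5\varepsilon}\leq 1+16\varepsilon$ on $0<\varepsilon\leq\tfrac18$, which after clearing denominators reduces to $10\varepsilon(1-8\varepsilon)\geq 0$; this produces the claimed estimate $\|A-ca^{-1}r\|_C\leq 4\delta(1+16\varepsilon)$.
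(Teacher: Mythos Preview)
Your argument is correct and follows essentially the same route as the paper: both set up the recursion $\nu_{k+1}\geq 1-2\varepsilon/\nu_k$, reach $1/2$ and then $1-4\varepsilon$ for the successive ratios, and finish with the $2\times2$ block estimate from Theorem~\ref{thw} to get $4\delta(1+\varepsilon)/(1-5\varepsilon)\leq 4\delta(1+16\varepsilon)$.

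The one place where you differ is actually a small improvement. The paper asserts that after the first step one has $|v_j|>4\varepsilon\|v\|_\infty$ ``by replacing $\mu_1$ by $\nu_1$'' in the argument of Theorem~\ref{thw}; but that earlier argument (showing the block maximum must lie in a ``good'' column) relied on $\mu_1$ being a root of the quadratic, which $4\varepsilon$ is not. You sidestep this neatly: from the mere existence of a column with $|v_{j_0}|\geq 4\varepsilon\|v\|_\infty$ you get $|a_{i_1j_1}|\geq 3\delta$, hence the product bound $\mu_{u,i_1}\mu_{v,j_1}\geq 2\varepsilon$, and combining with the column-max inequality $\mu_{u,i_1}\geq 1-2\varepsilon/\mu_{v,j_1}$ you obtain $\mu_{u,i_1}\geq\max(x,1-x)\geq\tfrac12$ without ever needing $\mu_{v,j_1}\geq 4\varepsilon$. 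This cleanly justifies the start of the recursion that the paper takes for granted; from that point on the two proofs are identical.
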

\begin{proof}
It turns out that  with the probability $1-\beta_v^k$ 
\[
|v_j| > 4\varepsilon \left\| v \right\|_{\infty} = \nu_1 \left\| v \right\|_{\infty},
\]
(it is easy to obtain by replacing $\mu_1$ by $\nu_1$).

Let the element found in column (row) be $\nu_k$ of the maximum in $u$ ($v$). Then, if the next found element is 
$\nu_{k+1}$ from maximum in column (row), the following inequality must be satisfied
\[
\nu_k \nu_{k+1} \sigma \left\| u \right\|_{\infty} \left\| v \right\|_{\infty} + \delta \geqslant \nu_k \sigma \left\| u \right\|_{\infty} \left\| v \right\|_{\infty} - \delta.
\]
Therefore
\[
\nu_{k+1} \geqslant 1 - \frac{2 \varepsilon} {\nu_k}.
\]
Substituting $\nu_1 = 4\varepsilon$, we find that

\begin{eqnarray}
\nu_2 & > & \frac{1}{2}, \nonumber \\
\nu_3 & > & 1 - 4\varepsilon. \nonumber
\end{eqnarray}

After the fourth step, both elements will be at least $\nu_3$. 
Indeed, $\mu_2 \geqslant 1 - 4\varepsilon \geqslant 4\varepsilon \geqslant \mu_1$, so, as shown in the proof of Theorem \ref{thw}, when $\mu = 1-4\varepsilon$ is between $\mu_1$ and $\mu_2$, every next coefficient cannot be less. 
Analogously to the Theorem \ref{thw}, we estimate the error of approximation in an arbitrary submatrix of $A$. This will give us the desired estimate for $C$-norm of the error:
\[
|d - ba^{-1}c| \leqslant 4\delta \frac{1+\varepsilon}{\nu_3 - \varepsilon} = 4\delta \frac{1 + \varepsilon}{1 - 5\varepsilon} \leqslant 4\delta (1 + \varepsilon)(1 + \frac{40}{3}\varepsilon) \leqslant 4\delta (1 + 16\varepsilon).
\]
Here we have taken into account that $\varepsilon \leqslant \frac{1}{8}$.
\end{proof}

Thus, in order to find the element, which is close to the maximum, with the prescribed probability, it suffices to compare only $ (k + 1) m + 2n $ elements of $A$.

In all the above estimates $\left\| u \right\|_{\infty} \geqslant \frac{1}{\sqrt m}$, $\left\| v \right\|_{\infty} \geqslant \frac{1}{\sqrt n}$. For upper bounds we can again use a probabilistic approach.

\begin{definition}
Vector $v \in \mathbb{C}^{n}$ is called \textbf{$\mu$-coherent} with the parameter $\mu > 0$, when
\[
\left\| v \right\|_{\infty} \leqslant \sqrt{\frac{\mu}{n}}.
\]
\end{definition}

\begin{proposition}
Let random vector $v$ be uniformly distributed on the sphere in $\mathbb{C}^{n}$, $n > 1$. Then with probability $1 - \frac{n^{-c(1 - \frac{1}{n})}}{\sqrt {c \ln n}}$ it is $\mu$-coherent with the parameter $\mu = 2c \ln n$.
\end{proposition}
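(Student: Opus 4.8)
The plan is to reduce to Gaussians exactly as in Lemma~\ref{l1}. Writing $v_i = x_i/\sqrt{S}$ with $S = \sum_{j=1}^n |x_j|^2 \sim \chi^2(n)$ and each $|x_i|^2 \sim \chi^2(1)$, one has $\|v\|_\infty^2 = \max_i |x_i|^2 / S$, so $v$ fails to be $\mu$-coherent precisely when $\max_i |v_i|^2 > \mu/n$. I would first estimate the tail of a single component and then pass to the maximum over the $n$ exchangeable coordinates. The heart of the argument is the one-coordinate bound, and I expect it to reproduce the stated prefactor $1/\sqrt{c\ln n}$ and power $n^{-c(1-1/n)}$ on the nose.

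For one coordinate, $|v_1|^2 = |x_1|^2/S$ is the ratio of a $\chi^2(1)$ to the same variable plus an independent $\chi^2(n-1)$, hence follows the Beta$(\tfrac12,\tfrac{n-1}{2})$ law with density proportional to $t^{-1/2}(1-t)^{(n-3)/2}$. I would bound the tail integral by pulling the slowly varying factor out, using $t^{-1/2}\leq (\mu/n)^{-1/2}$ on $[\mu/n,1]$, and integrating the remaining power exactly:
\[
\mathbb{P}\left(|v_1|^2 > \frac{\mu}{n}\right) \leq \frac{(\mu/n)^{-1/2}}{B\left(\tfrac12,\tfrac{n-1}{2}\right)}\,\frac{2}{n-1}\left(1-\frac{\mu}{n}\right)^{(n-1)/2}.
\]
The exponential factor is the engine of the estimate: with $\mu = 2c\ln n$ one has
\[
\left(1-\frac{\mu}{n}\right)^{(n-1)/2} \leq e^{-\mu(n-1)/(2n)} = n^{-c\left(1-\frac1n\right)},
\]
which is exactly the claimed power and is where the particular choice $\mu = 2c\ln n$ is dictated. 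The prefactor $1/\sqrt{c\ln n}$ then comes from $(\mu/n)^{-1/2} = \sqrt{n/(2c\ln n)}$ together with a lower bound on $B(\tfrac12,\tfrac{n-1}{2}) = \sqrt{\pi}\,\Gamma(\tfrac{n-1}{2})/\Gamma(\tfrac n2)$, for which I would use the Stirling-type Gamma-ratio estimate $\Gamma(\tfrac n2)/\Gamma(\tfrac{n-1}{2}) \leq \sqrt{(n-1)/2}$ in the same spirit as Proposition~\ref{pr}. Collecting the three factors gives a single-coordinate tail of the stated form $\tfrac{1}{\sqrt{c\ln n}}\,n^{-c(1-1/n)}$, with the leftover absolute constant $\sqrt{2/\pi}$ conveniently below $1$.

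The main obstacle is the passage from one coordinate to $\|v\|_\infty=\max_i|v_i|$. The natural device is a union bound over the $n$ identically distributed coordinates, and the delicate point is that this introduces a polynomial factor $n$ which the clean statement does not carry; so the reconciliation of that factor with the stated bound — whether by exploiting that the single-coordinate estimate already lies a full factor below the threshold in the nontrivial regime $2c\ln n < n$, or by working with the stronger decay available under a genuinely complex-Gaussian normalisation — is the one step that needs care. By contrast, the Gamma-ratio Stirling estimate and the elementary bound $(1-\mu/n)^{(n-1)/2}\leq e^{-\mu(n-1)/(2n)}$ are routine once Proposition~\ref{pr} is in hand, so I would expect the bookkeeping in controlling the maximum, rather than any individual inequality, to be the crux.
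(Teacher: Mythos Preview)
Your approach is essentially the paper's: the paper also applies a union bound over the $n$ coordinates and then bounds the single-coordinate tail, working with the Fisher $F(1,n-1)$ density for $|x_1|^2/\sum_{j\ne 1}|x_j|^2$ --- which is exactly your $\mathrm{Beta}(\tfrac12,\tfrac{n-1}{2})$ tail under the change of variable $x=(n-1)t/(1-t)$. The same Gamma-ratio estimate on $B(\tfrac12,\tfrac{n-1}{2})$ and the same inequality $(1-t)^{(n-1)/2}\le e^{-t(n-1)/2}$ appear, and the single-coordinate bound you wrote down,
\[
\sqrt{\tfrac{n}{\pi c(n-1)\ln n}}\;n^{-c(1-1/n)}\le \tfrac{1}{\sqrt{c\ln n}}\,n^{-c(1-1/n)},
\]
is precisely the final expression in the paper.

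The ``main obstacle'' you flag is real and is \emph{not} resolved in the paper: in the substitution step $t=\mu/n$ the paper passes from $\sqrt{2/\pi}\,\dfrac{n}{\sqrt{(n-1)t}}$ to $\sqrt{2/\pi}\,\sqrt{\dfrac{n}{\mu(n-1)}}$, which silently drops exactly the factor $n$ coming from the union bound. So the stated bound is in fact only the one-coordinate tail under the real-Gaussian model the proof uses, not a bound on $\|v\|_\infty$. Your second proposed remedy is the correct one: for $v$ genuinely uniform on the sphere in $\mathbb{C}^n$ one has $|v_i|^2\sim\mathrm{Beta}(1,n-1)$, hence $\mathbb{P}(|v_i|^2>t)=(1-t)^{n-1}\le n^{-2c(1-1/n)}$, and the union-bound factor $n$ is then absorbed for $c$ not too close to $1$. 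The paper's construction (real Gaussians times independent phases) does not produce this distribution, which is the source of the discrepancy.
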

\begin{proof}
We construct the vector $v$ as in the Proposition \ref{pr}. Then
\[
\mathbb{P}( |v_i|^2 < t) = \mathbb{P}\left( \frac{|x_i|^2}{\sum\limits_{j = 1}^n {|x_j|^2}} < t \right) = \mathbb{P}\left( \frac{|x_i|^2}{\sum\limits_{\begin{subarray}{l} 
  j = 1 \\ 
  j \ne i 
\end{subarray}}^n {|x_j|^2}} < \frac{t}{1 - t} \right),
\]
\[
\mathbb{P}(|v_i|^2 < t, \; i = \overline{1,n}) \leqslant n \mathbb{P}\left( \frac{|x_1|^2}{\sum\limits_{j = 2}^n {|x_j|^2}} < \frac{t}{1 - t} \right).
\]
Random value $\frac{|x_1|^2}{\sum\limits_{j = 2}^n {|x_j|^2}}$ has Fisher distribution with degrees of freedom $1$ and $n-1$. 
Now we can estimate the probability using the density function:
\[
\begin{gathered}
\mathbb{P}(\left\| v \right\|_{\infty}^2 < t) \leqslant n\int\limits_{(n - 1)\frac{t}{{1 - t}}}^\infty  {\frac{{\sqrt {\frac{{x{{(n - 1)}^{n - 1}}}}{{{{(x + n - 1)}^n}}}} }}{{x{\rm B}\left( {\frac{1}{2},\frac{{n - 1}}{2}} \right)}}dx}  = n\int\limits_{(n - 1)\frac{t}{{1 - t}}}^\infty  {\frac{{\sqrt {\frac{{{{(n - 1)}^{n - 1}}}}{{{{(x + n - 1)}^n}}}} }}{{\sqrt x {\rm B}\left( {\frac{1}{2},\frac{{n - 1}}{2}} \right)}}dx}  \leqslant  \hfill \\
   \leqslant /{x_0} = (n - 1)\frac{t}{{1 - t}}/ \leqslant n\int\limits_{{x_0}}^\infty  {\frac{{\sqrt {{x_0} + n - 1} \sqrt {\frac{{{{(n - 1)}^{n - 1}}}}{{{{(x + n - 1)}^n}}}} }}{{\sqrt {{x_0}(x + n - 1)} {\rm B}\left( {\frac{1}{2},\frac{{n - 1}}{2}} \right)}}} dx =  \hfill \\
   = \frac{{n{{(n - 1)}^{\frac{{n - 1}}{2}}}\sqrt {{x_0} + n - 1} }}{{\sqrt {{x_0}} {\rm B}\left( {\frac{1}{2},\frac{{n - 1}}{2}} \right)}}\int\limits_{{x_0}}^\infty  {\frac{{dx}}{{{{(x + n - 1)}^{\frac{{n + 1}}{2}}}}}}  \leqslant  \hfill \\
   \leqslant \frac{{n{{(n - 1)}^{\frac{{n - 1}}{2}}}\sqrt {{x_0} + n - 1} }}{{\sqrt {{x_0}} }}\frac{{\sqrt {\frac{{n - 1}}{2}} }}{{\sqrt \pi  }}\frac{2}{{n - 1}}{({x_0} + n - 1)^{ - \frac{{n - 1}}{2}}} =  \hfill \\
   = \sqrt {\frac{2}{\pi }} \frac{{n{{(n - 1)}^{\frac{{n - 2}}{2}}}}}{{\sqrt {{x_0}} }}{({x_0} + n - 1)^{ - \frac{{n - 2}}{2}}} =  \hfill \\
   = \sqrt {\frac{2}{\pi }} \frac{{n{{(n - 1)}^{\frac{{n - 2}}{2}}}}}{{\sqrt {(n - 1)\frac{t}{{1 - t}}} }}{\left( {(n - 1)\frac{t}{{1 - t}} + n - 1} \right)^{ - \frac{{n - 2}}{2}}} =  \hfill \\
   = \sqrt {\frac{2}{\pi }} \frac{n}{{\sqrt {(n - 1)t} }}{(1 - t)^{\frac{{n - 1}}{2}}} \leqslant \sqrt {\frac{2}{\pi }} \frac{n}{{\sqrt {(n - 1)t} }}{e^{ - t\frac{{n - 1}}{2}}} \leqslant  \hfill \\
   \leqslant \sqrt {\frac{2}{\pi }} \sqrt {\frac{n}{{\mu (n - 1)}}} {e^{ - \frac{\mu }{2}(1 - \frac{1}{n})}} = \sqrt {\frac{n}{{\pi c(n - 1)\ln n}}} {n^{ - c(1 - \frac{1}{n})}} \leqslant \frac{{{n^{ - c(1 - \frac{1}{n})}}}}{{\sqrt {c \ln n} }}. \hfill \\ 
\end{gathered}
\]
\end{proof}

Condition of $ \mu $-coherence can be used in case of hard-to-estimate $ C $-norm of the matrix $ E $.
Even if we demand its fulfilment for all rows and columns of a random unitary matrix, we can ensure with high probability that $ \mu \sim \ln n $.

\begin{cons}
Let the conditions of Theorem \ref{thw} be fulfilled.
Also let the rows $U \in \mathbb{C}^{m \times m}$ and columns $V \in \mathbb{C}^{n \times n}$  from singular value decomposition $A = U \Sigma V$ be $\mu$-coherent,
and $\sigma = \sigma_1(A)$ be the maximum singular value of the matrix, with the  corresponding singular vectors $u$ and $v$.
Then
\begin{equation}\label{eq-sum}
\delta \leqslant \frac{\mu}{\sqrt{mn}} \sum\limits_{j = 2}^{\min (m,n)} \sigma _j(A).
\end{equation}
If $U$ is a random unitary matrix, then with the probability
\[1 - \frac{nm^{-c(1 - \frac{1}{m})}}{\sqrt {c \ln m}},\]

\begin{equation}\label{eq-2}
\delta \leqslant \sqrt{\frac{2c \ln m}{m}} \sigma_2(A).
\end{equation}
\end{cons}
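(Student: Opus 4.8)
The plan is to identify $E$ with the tail of the singular value decomposition and then estimate its entries in two different ways. Writing the SVD as $A = \sum_{j=1}^{r} \sigma_j(A)\, p_j q_j^*$ with $r = \min(m,n)$, orthonormal columns $p_j$ of $U$ and rows $q_j^*$ of $V$, the hypothesis $\sigma = \sigma_1(A)$, $u = p_1$, $v = q_1$ forces
\[
E = A - \sigma u v^* = \sum_{j=2}^{r} \sigma_j(A)\, p_j q_j^*, \qquad E_{ik} = \sum_{j=2}^{r} \sigma_j(A)\,(p_j)_i\,\overline{(q_j)_k}.
\]
For the first bound I would apply the triangle inequality entrywise and insert the coherence estimates: $\mu$-coherence of the rows of $U$ gives $|(p_j)_i| = |U_{ij}| \leq \sqrt{\mu/m}$, and $\mu$-coherence of the columns of $V$ gives $|(q_j)_k| = |V_{jk}| \leq \sqrt{\mu/n}$. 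Pulling the constant factor out of the sum yields $|E_{ik}| \leq \frac{\mu}{\sqrt{mn}} \sum_{j=2}^{r} \sigma_j(A)$, and maximising over $i,k$ gives \eqref{eq-sum}. This part is purely routine.

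For the sharper estimate \eqref{eq-2} I would work column by column instead of entry by entry. Fix $k$ and write the $k$-th column of $E$ as $E_{\cdot k} = U w_k$, where $w_k = \Sigma' V_{\cdot k}$ has entries $(w_k)_j = \sigma_j(A)\,\overline{(q_j)_k}$ for $j \geq 2$ and $(w_k)_1 = 0$. Since $V$ is unitary its columns have unit norm, so $\sum_{j=1}^{n} |(q_j)_k|^2 = 1$ and therefore
\[
\|w_k\|_2^2 = \sum_{j=2}^{r} \sigma_j^2(A)\,|(q_j)_k|^2 \leq \sigma_2^2(A)\sum_{j=2}^{r} |(q_j)_k|^2 \leq \sigma_2^2(A).
\]
The key step is to use the rotational invariance of the Haar measure: conditioning on $\Sigma$ and $V$, the vector $U w_k$ is uniformly distributed on the sphere of radius $\|w_k\|_2$ in $\mathbb{C}^m$, so $\xi_k = E_{\cdot k}/\|w_k\|_2$ is uniform on the unit sphere and the preceding Proposition applies to it verbatim with $n$ replaced by $m$ and $\mu = 2c\ln m$.

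It then remains to assemble the probabilities. For each fixed $k$ the Proposition gives $\|\xi_k\|_\infty \leq \sqrt{2c\ln m/m}$ outside an event of probability $\frac{m^{-c(1-1/m)}}{\sqrt{c\ln m}}$; a union bound over the $n$ columns $k = 1,\dots,n$ produces exactly the failure probability $\frac{n\,m^{-c(1-1/m)}}{\sqrt{c\ln m}}$ claimed in the corollary. On the complementary event every column obeys $\|E_{\cdot k}\|_\infty = \|w_k\|_2\,\|\xi_k\|_\infty \leq \sqrt{2c\ln m/m}\,\sigma_2(A)$, and taking the maximum over $k$ delivers $\delta \leq \sqrt{2c\ln m/m}\,\sigma_2(A)$, which is \eqref{eq-2}. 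The main obstacle is precisely the second bound: the improvement over the crude entrywise estimate hinges on recognising that each column of $E$, being a single rotationally-invariant random vector whose length is controlled by $\sigma_2(A)$ alone through the unitarity of $V$, should be handled as a whole — applying the coherence proposition entry by entry would destroy this structure and lose the sharpening.
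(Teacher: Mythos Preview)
Your argument is correct and follows essentially the same route as the paper's own proof: both parts match. For \eqref{eq-sum} the paper also expands $E_{ik}$ via the SVD tail and bounds each factor by $\mu$-coherence; for \eqref{eq-2} the paper likewise fixes a column, identifies the vector you call $w_k$ (with $\|w_k\|_2\le\sigma_2(A)$ by unitarity of $V$), observes that $Uw_k$ is a rotationally invariant random vector so the coherence Proposition applies to it, and then takes a union bound over the $n$ columns to produce the factor $n$ in the failure probability. Your write-up is in fact more explicit than the paper's rather terse description of the second step.
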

\begin{proof}
\begin{align}
  {\left\| E \right\|_C} & = \mathop {\max }\limits_{i,k} \left| {\sum\limits_{j = 2}^{\min (m,n)} {{u_{ij}}{\sigma _j}{v_{jk}}} } \right| \nonumber \\
  & \leqslant \mathop {\max }\limits_{i,j} \left| {{u_{ij}}} \right|\mathop {\max }\limits_{j,k} \left| {{v_{jk}}} \right|\left| {\sum\limits_{j = 2}^{\min (m,n)} {{\sigma _j}} } \right| \nonumber \\
  & \leqslant \frac{\mu }{\sqrt{mn}}\sum\limits_{j = 2}^{\min (m,n)} {{\sigma _j}}, \nonumber
\end{align}
which proves the inequality (\ref{eq-sum}).

To prove (\ref{eq-2}) consider the vector $e$ with components $e_k = \sigma_j v_{jk}$, $e_1 = 0$. 
Its Euclidean norm is not greater than $\sigma_2(A)$. By selecting it as one of the basis vectors (with any other orthonormal vectors), we get in the above product simply an element of a random vector $u_i$, but in the new basis. We need to apply the condition on $ mn $ different components, which is equivalent to $n$ uses of $\mu$-coherence. As a result, we obtain the required inequality.
\end{proof}

In addition, from the probability estimate of $\mu$-coherence it is clear that in order to guarantee, that the value of $\beta_v$ is less than 1, it is required that $\varepsilon \sim \frac{1}{\sqrt{\ln n}}$. And, although after entering the ``good'' column, it will require very few steps to get a good estimate, it may be necessary to view a lot of columns to ensure that the column or row will actually be ``good''.

In practice, of course, algorithm is used without viewing the columns. This is, firstly, due to the fact that each step of the algorithm is roughly equivalent to increasing $ k $ by 1. 
In addition, selecting an element corresponding to large value in $ \sigma uv ^ * $ is more probable than selecting the one for a smaller value. 
However, analysis of such probabilities is much more difficult. Nevertheless, it can be done by imposing additional restrictions on the matrix $ E $.

\begin{theorem}
Let under the conditions of Theorem \ref{th} for a matrix $A \in \mathbb{R}^{n \times n}$, vectors $u$ and $v$ are uniformly distributed on the sphere in $\mathbb{R}^n$,
\[
  \beta_u = \frac{\left( 1 - \sqrt{1 - 8 \varepsilon}\right) \left\| u \right\|_{\infty} \sqrt{ n - 2 + 2\sqrt{c(n - 2) \ln n} } }{\sqrt{2\pi}},
\]
and the matrix $E$ consists of independent (including the $ u $ and $ v $) random variables with uniformly distributed on the interval $[0; \delta]$ modules.

Suppose that at the beginning algorithm $\operatorname{maxvol}$ instead of viewing $k$ random columns at least $k$ steps are made, and the maximum element among the viewed ones is selected  (if there are less than $k$ steps, the algorithm continues with the next to the maximum element). 

Then the estimate (\ref{th}) holds with the probability
\[
1 - 2\alpha n^{-c} - \alpha_0 n^{-\gamma k} - \left( \frac{c_0 \ln n}{n} \right)^k.
\]
where
\begin{eqnarray}
  \gamma & = & 1 - \beta  - \frac{2 \varepsilon \|u\|_\infty \|v\|_\infty}{c_0} \cdot \frac{{2\left( {n - 2 + 2\sqrt {c\left( {n - 2} \right)\ln n} } \right)}}{{\pi}}, \nonumber \\
  \alpha & = & \left( {\frac{1}{{\sqrt {\pi (n - 2)}  }} + \frac{1}{2{\sqrt {c\pi \ln n} }}} \right){e^{\frac{4}{3}\sqrt {\frac{{{c^3}{{\ln }^3}n}}{{n - 2}}} }}, \nonumber \\
  \alpha_0 & = & e^{\gamma \frac{k^2 \ln^2 n}{2n}} \nonumber
\end{eqnarray}
with the arbitrary constants $c$ and $c_0$.
\end{theorem}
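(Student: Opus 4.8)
The plan is to keep the deterministic core of Theorem~\ref{thw} untouched and only replace the combinatorial input. Recall that once the $\operatorname{maxvol}$ trajectory reaches a ``good'' column or row --- one whose relative coordinate exceeds $\mu_1$ --- the monotonicity argument of Theorem~\ref{thw} shows $\mu$ never decreases afterwards and eventually passes $\mu_2$, which is exactly what produces the bound~\eqref{th}. Hence the whole task reduces to bounding the probability that, during the first $k$ steps, the trajectory never visits a good column or row; on the complementary event the maximal viewed element already sits in a good line and the continuation of $\operatorname{maxvol}$ finishes the job. Since here \emph{both} $u$ and $v$ are random sphere vectors, I would first apply Proposition~\ref{pr} twice, to $\sum_i|x_i^{(u)}|^2$ and to $\sum_j|x_j^{(v)}|^2$, so that with probability $1-2\alpha n^{-c}$ both normalizing sums stay below $n-2+2\sqrt{c(n-2)\ln n}$; this yields the $2\alpha n^{-c}$ term and, exactly as in Lemma~\ref{l1}, converts coordinate thresholds into Gaussian thresholds.

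The key step is a single-step estimate for the trajectory. Fix a step examining, say, a row $i$ and let $j^\ast=\operatorname{argmax}_j|\sigma u_i v_j+e_{ij}|$ be the selected column. The maximization is strongly biased toward good columns: the distinguished column $j_0$ with $|v_{j_0}|=\|v\|_\infty$ already delivers a value at least $\sigma|u_i|\,\|v\|_\infty-\delta$, so a bad column $j$ can win only if its noise overrides the signal gap, i.e. $e_{ij}-e_{ij_0}>\sigma|u_i|(\|v\|_\infty-|v_j|)$. Because the moduli of the entries of $E$ are uniform on $[0,\delta]$, this overshoot probability is explicit; summing it over the (at most $\beta n$) bad columns and combining with the intrinsic sphere tail $\beta$ of Lemma~\ref{l1} leads to a per-step failure probability of order $n^{-\gamma}$, where the exponent splits precisely as the bias strength reduced by $\beta$ and the noise correction,
\[
\gamma = 1 - \beta - \frac{2\varepsilon \|u\|_\infty \|v\|_\infty}{c_0}\cdot\frac{2\left(n-2+2\sqrt{c(n-2)\ln n}\right)}{\pi}.
\]
Here the free constant $c_0$ is the threshold below which a coordinate $|v_j|$ is treated as small in the overshoot bound, and the factor $2(n-2+2\sqrt{c(n-2)\ln n})/\pi$ is the same typical coordinate scale already met in Lemma~\ref{l1}.

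Having a per-step failure bound, I would compound it over the $k$ steps. The steps are not independent --- each one consumes a row or column, so only $n-t$ fresh coordinates remain at step $t$ --- which degrades the per-step failure probability by a factor $e^{\gamma\ln^2 n/n}$ each time. Multiplying,
\[
\prod_{t=1}^{k} n^{-\gamma}e^{\gamma t\ln^2 n/n}\leqslant n^{-\gamma k}e^{\gamma k^2\ln^2 n/(2n)}=\alpha_0\, n^{-\gamma k},
\]
which is the main term, with the prefactor $\alpha_0=e^{\gamma k^2\ln^2 n/(2n)}$ being exactly this $O(k^2/n)$ dependence correction. Finally, the overshoot bound is valid only while the examined coordinate stays below the $c_0$-threshold; the residual event that some coordinate exceeds it at every one of the $k$ steps is a coherence-type upper tail, controlled by the Fisher-distribution estimate used for $\mu$-coherence, and contributes $(c_0\ln n/n)^k$. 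A union bound over these failure events gives the stated probability.

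The main obstacle is the single-step $\operatorname{argmax}$ estimate. Unlike Theorem~\ref{thw}, the selected index is not a fresh uniform sample from the sphere but the $\operatorname{argmax}$ of signal-plus-noise, so it is both skewed toward large coordinates and statistically entangled with $u$ and $v$; quantifying the probability that noise overrides the signal gap --- and doing so uniformly enough that the per-step bound $n^{-\gamma}$ survives the conditioning required at each step --- is the delicate part. The uniform-on-$[0,\delta]$ law of the noise moduli is precisely what makes the overshoot probability computable in closed form, and controlling the cross-step dependence so that it only costs the benign $k^2/n$ factor in $\alpha_0$ is the remaining bookkeeping.
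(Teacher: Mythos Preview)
Your reduction --- keep the deterministic core of Theorem~\ref{thw} and bound the probability that the first $k$ steps never reach a good row or column --- is the same as the paper's, and the $2\alpha n^{-c}$ contribution from applying Proposition~\ref{pr} to both Gaussian normalizers is correctly identified. The genuine gap is in your single-step estimate.

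Your argument says a bad column $j$ beats the distinguished column $j_0$ only if the noise overshoot $e_{ij}-e_{ij_0}$ exceeds the signal gap $\sigma|u_i|(\|v\|_\infty-|v_j|)$, and that summing this over the bad columns yields a per-step failure probability $n^{-\gamma}$. But the gap is proportional to $|u_i|$, the coordinate of the \emph{current} row, and nothing in your setup controls $|u_i|$ from below: at an intermediate step we may be sitting in a bad row with $|u_i|$ tiny, in which case the overshoot probability is close to $1$ for every bad column and the union bound is vacuous. Appealing to the fact that $i$ was itself an $\operatorname{argmax}$ at the previous step is circular --- that is exactly the statement you are trying to prove. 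The paper avoids this by \emph{not} analysing the $\operatorname{argmax}$ directly. It introduces thresholds $\mu_0,\varepsilon_0$ and asks whether the current column contains a \emph{witness} index $i$ satisfying four independent conditions: $|v_j|>\mu_0$, the signs of $(\sigma uv^*)_{ij}$ and $E_{ij}$ agree, $|u_i|$ exceeds a fixed level, and $|E_{ij}|\geq(1-\varepsilon_0)\delta$. If any witness exists, the column maximum is forced into a good row regardless of where we currently sit. The probability that no witness exists then factorizes as a double binomial sum --- over how many entries have near-maximal noise and over how many row indices $i$ have large $|u_i|$ --- and both sums collapse via the binomial theorem to $n^{-\gamma k}$ after setting $\varepsilon_0=2\ln n/n$.

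Two smaller misattributions: the prefactor $\alpha_0=e^{\gamma k^2\ln^2 n/(2n)}$ is not a cross-step dependence correction but the second-order term in the exponential bound on $(1-\varepsilon_0/(2n))^{kn}$ once $\varepsilon_0=2\ln n/n$ is substituted (the paper in fact argues that conditioning on past failures only helps, so dependence costs nothing). And the term $(c_0\ln n/n)^k$ is not an upper-tail coherence event via the Fisher distribution; it is simply the probability that at each of the $k$ steps the current column has $|v_j|<\mu_0$, bounded by the crude lower-tail estimate $\mathbb{P}(|v_j|\leq\mu_0)\leq c_0\ln n/n$.
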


Thus, each step of the algorithm reduces the probability of error in almost $ n $ times when $ c_0 $ is large enough (about $\mu$ in the case of $\mu$-coherence). 

\begin{proof}
Firstly, we immediately note that due to the independence of the elements of $\sigma u v$ and $E$ the probability to get to the large elements in $\sigma u v^*$ after each step of the algorithm is not lower than simply by browsing a random column or row.
Thus, not less beneficial is just to make $ n $ steps than to seek at the beginning maximum in $ n $ rows or columns.
This ``benefit'' can be evaluated quantitatively.

Taking into account that the elements of the matrices $\sigma u v^*$ and $E$ are independent random variables, consider for each pair of indices $(i, j)$ four conditions
\begin{eqnarray}
\label{eq:cond-1}
|v_j| & > & \mu_0, \\
\label{eq:cond-2}
(\sigma u v^*)_{ij} E_{ij} & > & 0, \\
\label{eq:cond-3}
\sigma |u_i| \mu_0 & \geqslant & \sigma \mu_1 \|u\|_\infty \mu_0 + \varepsilon_0 \delta, \\
\label{eq:cond-4}
|E_{ij}| & \geqslant &  \left(1 - \varepsilon_0 \right) \delta,
\end{eqnarray}
where $\mu_0 > 0$ and $0 < \varepsilon_0 < 1$ -- are some parameters that will be determined later. 

Let us make some observations. Firstly, if the element $a_{ij} = (\sigma uv^*)_{ij} + E_{ij}$ satisfies the conditions (\ref{eq:cond-1}) -- (\ref{eq:cond-4}), then this element definitely belongs to the ``good'' row.
Indeed, from the following chain of equalities and inequalities
\[
\begin{array}{llll}
|a_{ij}| & = & | (\sigma uv^*)_{ij} + E_{ij} | & (\mbox{from~(\ref{eq:cond-2})}) \\
 & = &  | (\sigma uv^*)_{ij}| + |E_{ij} | & (\mbox{from~(\ref{eq:cond-4})}) \\
 & \geqslant & | \sigma |u_i| |v_j| + \left(1 - \varepsilon_0 \right) \delta &\\
 &  = &   \sigma |u_i| \mu_0 \frac{|v_j|}{\mu_0} + \left(1 - \varepsilon_0 \right) \delta &
(\mbox{from~(\ref{eq:cond-3})}) \\
 & \geqslant &   \sigma \mu_1 \|u \|_\infty |v_j| + \frac{|v_j|}{\mu_0} \varepsilon_0 \delta 
 + \left(1 - \varepsilon_0 \right) \delta & (\mbox{from~(\ref{eq:cond-1})}) \\
 & > & \sigma \mu_1 \|u\|_\infty |v_j| + \delta, &
\end{array}
\]
it follows that $|u_i| > \mu_1 \|u\|_\infty$, and the row number $i$ is ``good''.

Secondly, if the column has at least one element that matches the conditions (\ref{eq:cond-1}) -- (\ref{eq:cond-4}), then the maximum in modulus element of this column belongs to the ``good'' row too.
Indeed, suppose that $a$ is the maximum in modulus element in column $j$, and $a_{ij}$ satisfies (\ref{eq:cond-1}) -- (\ref{eq:cond-4}). 
Then
\[
|a| \geqslant |a_{ij}| > \sigma \mu_1 \|u\|_\infty |v_j| + \delta, 
\]
which is equivalent to the fact that $a$ belongs to the ``good'' row. 
At the same time, generally speaking, for the maximum in modulus element in the column all of the conditions need not to be fulfilled.

Finally, we note that (\ref{eq:cond-2}) -- (\ref{eq:cond-3}) define independent events on the set of matrix elements.

Let us fix the index $j$. 
Suppose, that the condition (\ref{eq:cond-1}) holds for a column number $j$  
(that is, $|v_j| \geqslant \mu_0$). 
We estimate the probability that at least one element in this column fulfils the other three conditions (\ref{eq:cond-2}) -- (\ref{eq:cond-4}). 
In view of the above, this assessment will also estimate the probability that the maximum element of $j$-th column belongs to the ``good'' row or equivalently that one step of the $\operatorname{maxvol}$ algorithm gives an element in a ``good'' row.

First of all, we estimate the probability that exactly $k$ elements in the $j$-th column of the matrix $E$ are within $\varepsilon_0 \delta$ of the maximum. For any element this probability is not less then $\varepsilon_0$. So, for a set of $k$ elements we can take
\[
  {P_k} = C_n^k{\left( {\varepsilon_0} \right)^k}{\left( {1 - \varepsilon_0} \right)^{n - k}}.
\]
From the independence of matrix elements in $\sigma u v^*$ and $E$, we get the probability of fulfilling (\ref{eq:cond-2}) equal to $\frac12$.

Let the number of elements that satisfy the condition (\ref{eq:cond-3}) be equal to $l$. Under this condition, the probability (in fact this is a conditional probability) of fulfilment (\ref{eq:cond-3}) is not less than $\frac{l}{n}$
\[
\mathbb{P} \left(\sigma |u_i| \mu_0 \geq \sigma \mu_1 \|u\|_\infty \mu_0 + \varepsilon_0 \delta \right) \geqslant \frac{l}{n}.
\]
Thus, for the considered random realizations the probability of an arbitrary element of $j$-th column to simultaneously satisfy the conditions (\ref{eq:cond-2}) and (\ref{eq:cond-3}) is not less than $\frac{l}{2n}$, and the probability of violation of at least one of them is not more than $1 - \frac{l}{2n}$.

Now we evaluate the probability ${\cal P}_1$ that the column with number $j$ has no elements satisfying all the conditions. 
This assessment can be in obvious way written as
\begin{align}
 {\cal P}_1  & =  \sum\limits_{k = 0}^n {{P_k}{{\left( {1 - \frac{l}{2n}} \right)}^k}} \hfill \nonumber \\  
             & =  \sum\limits_{k = 0}^n {C_n^k{{\left( {1 - \frac{l}{2n}} \right)}^k}} {\left( {\varepsilon_0} \right)^k}{\left( {1 - \varepsilon_0} \right)^{n - k}} \hfill \nonumber \\
   & = {\left( {\varepsilon_0\left( {1 - \frac{l}{2n}} \right) + 1 - \varepsilon_0} \right)^n} \hfill \nonumber \\ 
   & = {\left( {1 - \frac{{l\varepsilon_0}}{2n}} \right)^n}. \hfill \nonumber
\end{align}

After $k$ steps, this probability will not exceed
\[
{\left( {1 - \frac{{l\varepsilon_0}}{2n}} \right)^{nk}} \leqslant {\left( {1 - \frac{\varepsilon_0}{2n}} \right)^{nlk}}.
\]

Now we need to sum this value for all values of $l$, thereby calculating the total probability. Denoting by $\gamma$ the probability of satisfying (\ref{eq:cond-3}) independently for all elements (analogue to $1 - \beta$), we get that
\begin{align}
  {P_2} & = \sum\limits_{l = 0}^n {C_n^l{\gamma ^l}{{\left( {1 - \gamma } \right)}^{n - l}}{{\left( {1 - \frac{{{\varepsilon _0}}}{2n}} \right)}^{nkl}}} \nonumber \\
  & = {\left( {\gamma {{\left( {1 - \frac{{{\varepsilon _0}}}{2n}} \right)}^{kn}} + 1 - \gamma } \right)^n} \nonumber \\
  & \leqslant {\left( {1 - \frac{{{\gamma\varepsilon _0}k}}{2}\left( {1 - \frac{{{\varepsilon _0}k}}{4}} \right)} \right)^n} \nonumber \\
  & \leqslant {e^{-\frac{{{\gamma\varepsilon _0}kn}}{2}\left( {1 - \frac{{{\varepsilon _0}k}}{4}} \right)}}. \nonumber
\end{align} 
The probabilities for rows and columns can be calculated independently, so as a result, since it is still all raised to the power $k$, no matter how many steps has been done in rows and how many in columns.

To complete the proof it remains to estimate the probability ${\cal P}_2$ that at least half of the column elements satisfy (\ref{eq:cond-3}). 
The latter is equivalent to the assertion that half of the components of the uniformly distributed on the sphere vector $u$ satisfy the inequality
\begin{equation} \label{eq:calp_2}
|u_i| \geqslant \mu_1 \|u\|_\infty + \frac{\varepsilon_0 \cdot \delta}{\mu_0 \cdot \sigma}. 
\end{equation}
We use lemma~\ref{l1} to get the estimates on ${\cal P}_2$. 
For this purpose, we define variables $\tau$ and $\gamma$ as
\begin{eqnarray}
\tau  & = & \mu_1 \|u\|_\infty  + \frac{{\varepsilon_0 \delta }}{{\sigma {\mu _0}}}, \nonumber \\
\gamma  & = & 1 - \sqrt {\frac{{2 \tau^2 \left( {n - 2 - 2\sqrt {c\left( {n - 2} \right)\ln n} } \right)}}{\pi }}. \nonumber
\end{eqnarray}
By lemma~\ref{l1} for an arbitrary set of $k$ elements, the probability that all the absolute values are less than $\hat{\mu}$ does not exceed $\gamma^k$.

In addition, we need to take into account the requirement $|v_j| \geqslant \mu_0$. This can be done by simply adding the probability of the opposite. Even if a few steps have been made, this probability cannot reduce: if we have already reached a good column or row, this probability is just zero, so adding the condition that until now such a row or column is not found, does not change the distribution. And the fact that the column was chosen not randomly, but using the algorithm, as shown above, only reduces the probability of the opposite.
As for the fact that some elements might have already been viewed, we can ignore them: if a ``good'' row or column has not been previously found, then they are ``bad'', and discarding them from the consideration only increases the probability of finding a ``good'' one (we evaluate the probability under this particular condition).

Now we can choose $\varepsilon_0$ and $\mu_0$ 
\[
\begin{gathered}
  \varepsilon_0 = \frac{2\ln n}{n} \hfill
  \\
  \mu_0 = c_0 \frac{\ln n}{n \sqrt{\frac{2(n-2 - 2\sqrt{c(n-2)\ln n})}{\pi}}} \hfill
\end{gathered}
\]
Then, firstly,
\[
\mathbb{P}\left( {\left| {{v_j}} \right| \leqslant {\mu _0}} \right) \leqslant {c_0}\frac{{\ln n}}{n},
\]
secondly,
\[
{\cal P}_2 \leqslant 2 \alpha_0 n^{-\gamma k}
\]
and thirdly, we estimate the value of $\gamma$
\[\begin{gathered}
  \gamma  = 1 - \beta  - \frac{{\varepsilon_0 \delta }}{{\sigma {\mu _0}}}\sqrt {\frac{{2\left( {n - 2 + 2\sqrt {c\left( {n - 2} \right)\ln n} } \right)}}{\pi }}  \geqslant  \hfill \\
   \geqslant 1 - \beta  - \frac{2 \varepsilon \|u\|_\infty \|v\|_\infty}{c_0} \cdot \frac{{2\left( {n - 2 + 2\sqrt {c\left( {n - 2} \right)\ln n} } \right)}}{{\pi}}. \hfill \\ 
\end{gathered} \]
Putting together all the probabilities, we find that the probability to get to the ``good'' element after $k$ 
steps of the algorithm is bounded from above by
\[
1 - 2\alpha n^{-c} - \alpha_0 n^{-\gamma k} - \left( \frac{c_0 \ln n}{n} \right)^k.
\]
\end{proof}

It is easy to see that  for the probability of order $n^{-k}$ the number of steps does not depend on $n$.

For simplicity we have taken a square matrix, but the claim is easily generalized to the case $ m \neq n $: for this case, instead of just multiplying by 2 we can take separate terms for $ u $ and $ v $. 
The steps  for the rows and columns also should be considered separately due to their different sizes.

It can also be generalized to the complex case: it is enough to take the value of $\varepsilon_0$a little more and replace (\ref{eq:cond-2}) by the condition on the smallness of the phase.

\section{Numerical experiments}\label{calc-sec}

Before proceeding to the calculations, it is important to understand what happens if the inequality (\ref{ed}) is not satisfied. 
In this case, the error will be about $C$-norm of the whole matrix. 
In the worst case, it is
\[
|d - ba^{-1}c| \leqslant |d| + |a| \leqslant \sigma  \left\| u \right\|_{\infty} \left\| v \right\|_{\infty} + \delta + |a|.
\]
If $|a|$ is sufficiently large, then, as we already know, the error can not exceed $\frac{|d|}{|a|}4 \delta$.

Taking the minimum of $4 \delta \frac{|d|}{|a|}$ and $|d| + |a|$ and substituting the estimate for $d$, we find that the error will not exceed
\begin{equation}\label{maxd}
\frac{1 + \delta + \sqrt{(1 + \delta)(1 + 17\delta)}}{2}.
\end{equation}

\begin{figure}[ht]
\centering
\includegraphics[width=\columnwidth]{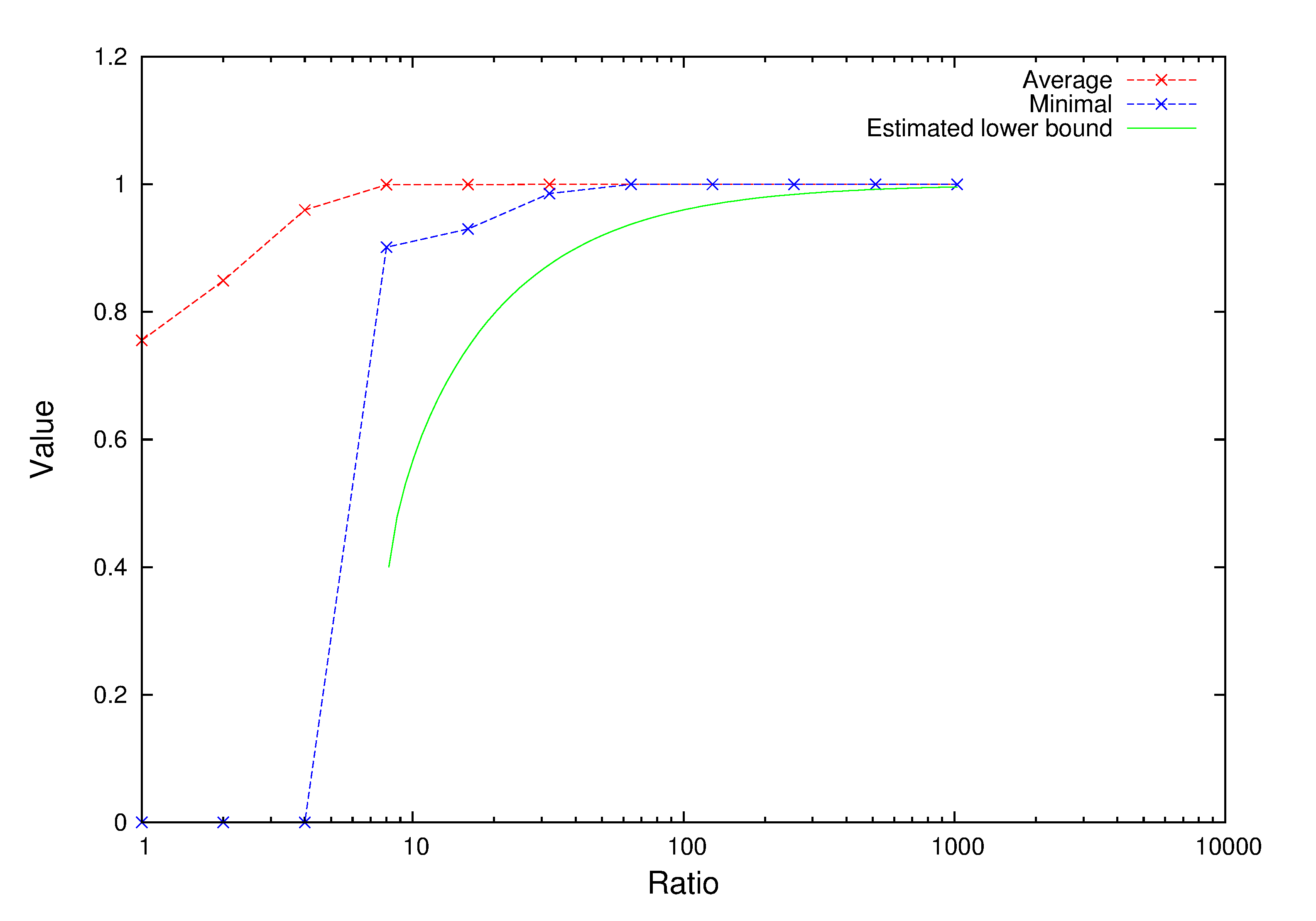}
\caption{The dependence of the Value = ratio between the found element and the maximum from the Ratio = $\frac{\sigma  \left\| u \right\|_{\infty} \left\| v \right\|_{\infty}}{\delta}$.
We show the mean, and the minimum for $1000$ matrix generations, and the lower bound estimate which is equal to $\sigma \mu_2^2  \left\| u \right\|_{\infty} \left\| v \right\|_{\infty} + \delta$.}\label{pic1}
\end{figure}

To verify the accuracy of the estimates, the calculations were carried out for the random matrices. Namely, the matrix was set by its singular value decomposition. 
The left and right singular vectors were randomly selected, the first singular value was selected from the equation
\[
x = \frac{\sigma  \left\| u \right\|_{\infty} \left\| v \right\|_{\infty}}{\delta},
\]
and all the rest singular values were set to 1.

The value of $x$ was placed on the horizontal axis. 
If $x \geqslant 8$, then we verified that the column is ``good'' before applying $\operatorname{maxvol}$. 
Figure \ref{pic1} illustrates the relationship between the found element and the maximum element of the matrix. 
Figure \ref{pic2} shows the approximation error. Figure \ref{pic3} shows the probability of hitting a ``bad'' column.

\begin{figure}[ht]
\centering
\includegraphics[width=\columnwidth]{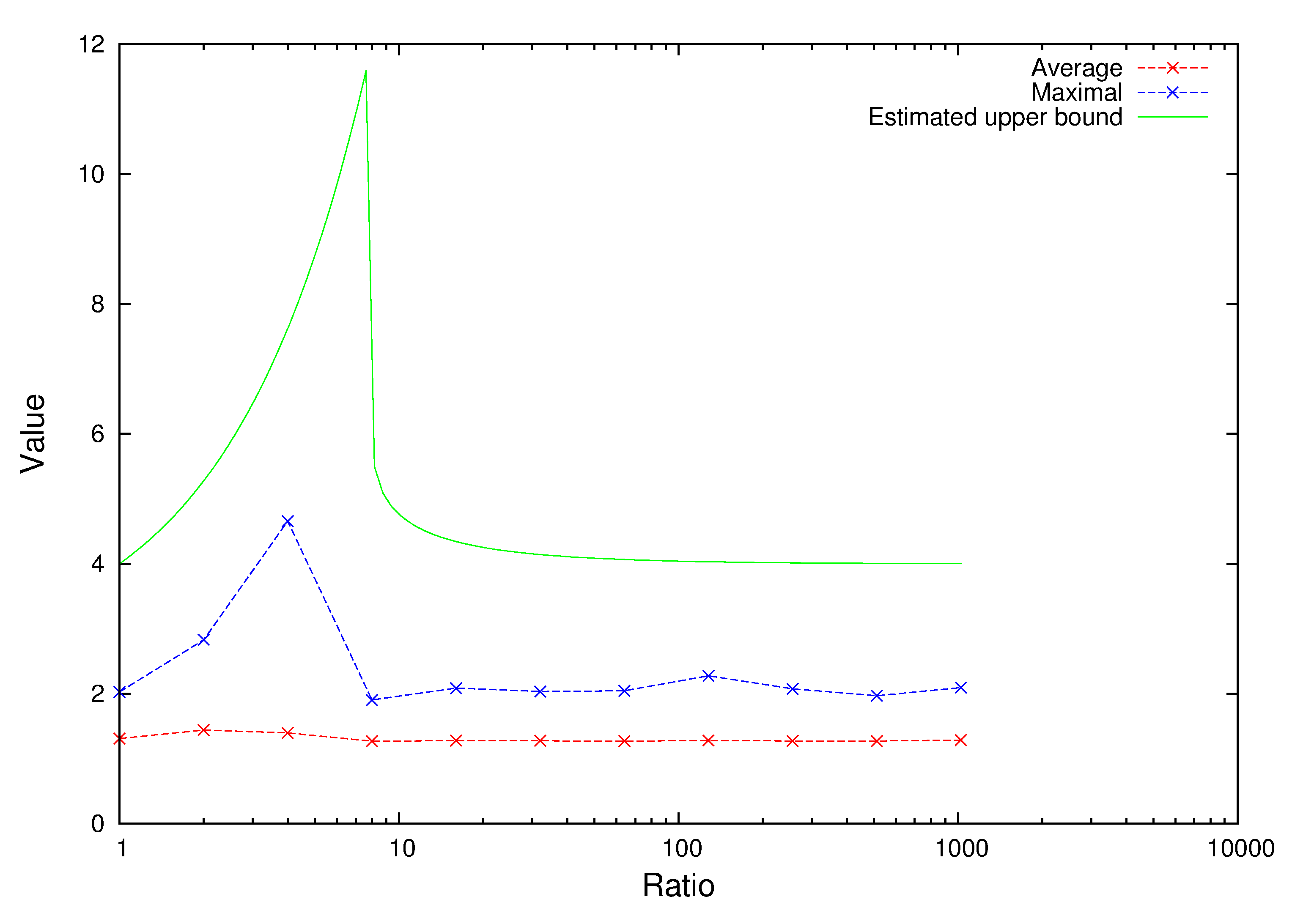}
\caption{The dependence of the Value = ratio between approximation error with respect to $\delta$ from the Ratio = $\frac{\sigma  \left\| u \right\|_{\infty} \left\| v \right\|_{\infty}}{\delta}$. 
We show the mean, and the minimum for $1000$ matrix generations, and the estimate of the error. 
If $\frac{1}{\varepsilon} < 8$ then the expression (\ref{maxd}) was used.}\label{pic2}
\end{figure}

\begin{figure}[ht]
\centering
\includegraphics[width=\columnwidth]{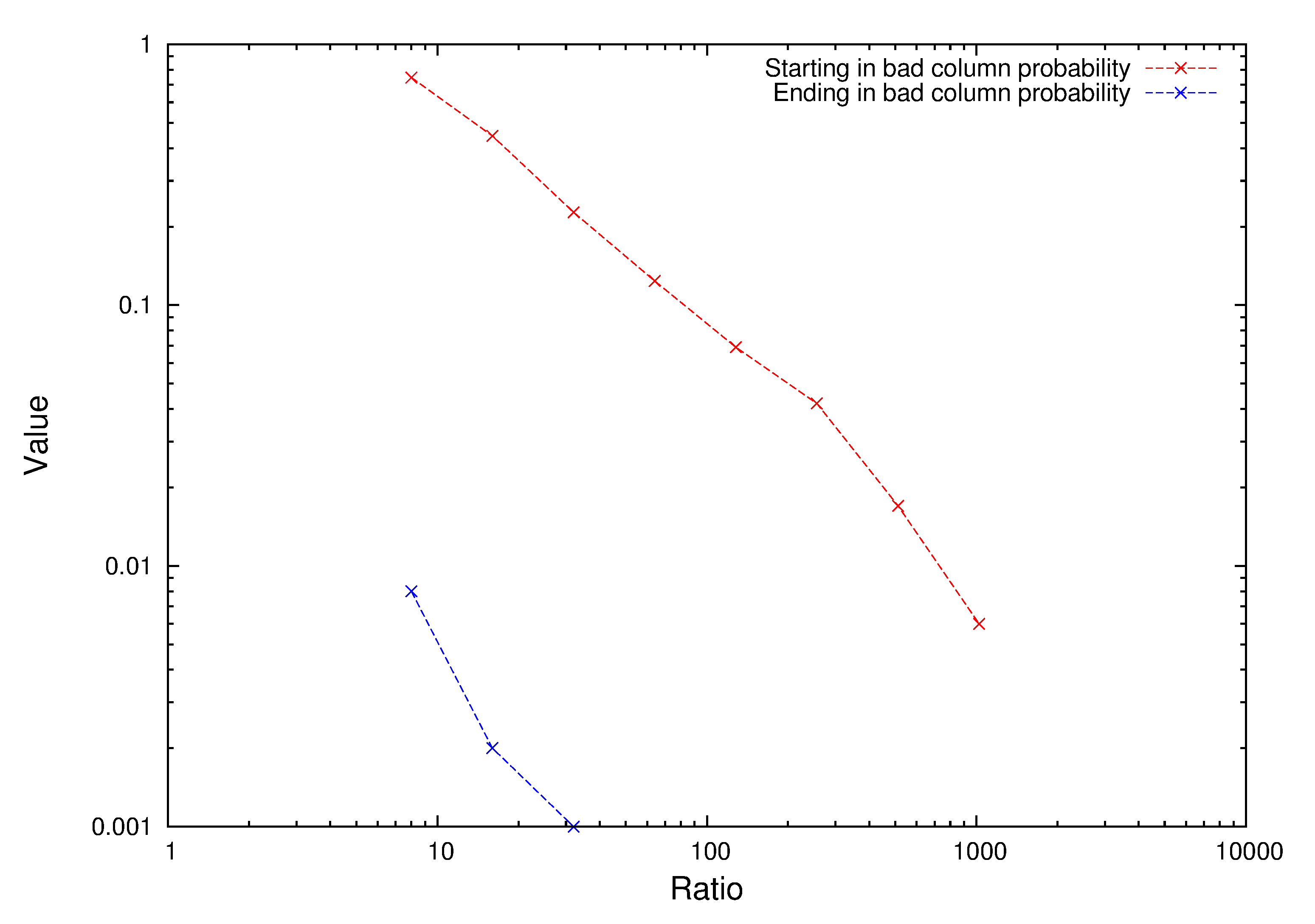}
\caption{The dependence of the Value = probability to get into the ``bad" column from the Ratio $\frac{\sigma  \left\| u \right\|_{\infty} \left\| v \right\|_{\infty}}{\delta}$. 
The results are obtained for the $1000$ matrix generations.
The probabilities are shown for randomly selected columns and the columns obtained after applying the algorithm.
}\label{pic3}
\end{figure}

It is seen that the maximum error is different from our estimate about 2 times. This is probably due to the fact that the selected matrix $ E $ is the best approximation in $ 2 $-norm, but not in $ C $-norm.

The last figure shows that the probability of not getting into the ``good'' column almost vanishes after the application of the algorithm. This means that the matrix of the best rank 1 approximation and its error are not closely related, and even if we start from a ``bad'' column there is a great chance to get eventually to a ``good'' one.

\section{Conclusion}

We proved that 
in the important particular case of rank one approximations
algorithm $\operatorname{maxvol}$ applied for the random matrices 
finds the element close to the maximum with the probability close to 1.
This guarantees the high accuracy of the cross approximations.

\section{Acknowledgments}
The work was supported by the Russian Science Foundation, Grant 14-11-00806.

\newpage
\par\bigskip\noindent
{\hfil\bf Bibliography\hfil}\par\medskip

\end{document}